\tikzstyle{vertex}=[circle, draw, inner sep=0pt, minimum size=6pt]
\newcolumntype{M}[1]{>{\centering\arraybackslash}m{#1}}
\newcolumntype{N}{@{}m{0pt}@{}}
\newcommand{\rt}{\ell}
\newcommand{\E}{E_C}
\newcommand{\EB}{E_B}
\newcommand{\hroot}{\tilde{\alpha}}
\newtheorem{theorem}{Theorem}[section]
\newtheorem{lemma}[theorem]{Lemma}
\newtheorem{proposition}[theorem]{Proposition}
\newtheorem{corollary}{Corollary}
\theoremstyle{definition}
\newtheorem{definition}[theorem]{Definition}
\newtheorem{example}[theorem]{Example}
\newtheorem*{theorem*}{Theorem}
\newtheoremstyle{named}{}{}{\itshape}{}{\bfseries}{.}{.5em}{\thmnote{#3 }#1}
\theoremstyle{named}
\def\A{{ \mathbf u}}
\def\B{{\mathbf v}}
\def\P{{\mathscr{P}}}
\def\CC{{\mathbb C}}
\def\hr{\tilde{\alpha}}
\title{The $q$-analog of Kostant's partition function and the highest root of the classical Lie algebras}
 \author[1]{Pamela E. Harris\thanks{\textcolor{blue}{\href{mailto:Pamela.Harris@usma.edu}{Pamela.Harris@usma.edu}}. This research was performed while the author held a National Research Council Research Associateship Award at USMA/ARL.}}
 \author[2]{Erik Insko\thanks{\textcolor{blue}{\href{mailto:einsko@fgcu.edu}{einsko@fgcu.edu}}}}
 \author[3]{Mohamed Omar\thanks{\textcolor{blue}{\href{mailto:omar@g.hmc.edu}{omar@g.hmc.edu}} This research was supported by research funds from Harvey Mudd College.}}
 \affil[1]{\href{http://www.usma.edu/math/SitePages/Home.aspx}{Department of Mathematical Sciences, United States Military Academy}}
 \affil[2]{\href{http://www.fgcu.edu/CAS/Departments/Math/12490.asp}{Department of Mathematics, Florida Gulf Coast University}}
 \affil[3]{\href{https://www.math.hmc.edu}{Department of Mathematics, Harvey Mudd College}}
\begin{document}
  \maketitle
\begin{abstract}
Kostant's partition function counts the number of ways to represent a particular vector (weight) as a nonnegative integral sum of positive roots of a Lie algebra. For a given weight
the $q$-analog of Kostant's partition function is a polynomial where the coefficient of $q^k$  is the number of 
ways 
the weight can be written as a nonnegative integral sum of exactly $k$ positive roots. In this paper we determine generating 
functions for the $q$-analog of Kostant's partition function when the weight in question is the highest root of the classical Lie 
algebras of types $B$, $C$ and $D$.

\end{abstract}

\noindent
\textbf{MSC 2010 subject classifications:} 05E10, 22E60, 05A15 \\
\textbf{Keywords and phrases:} Kostant's partition function, $q$-analog of Kostant's partition function, root systems, highest root, combinatorial representation theory.

\section{Introduction}

In this paper we focus on finding an explicit formula for a partition counting problem in the setting of combinatorial representation theory of finite-dimensional classical Lie algebras. The problem of interest involves Kostant's partition function, whose function values are given by the coefficients of the power series expansion of the product
\[\prod_{\alpha\in\Phi^+}\left(\frac{1}{1-e^{-\alpha}}\right)\]
where $\Phi^+$ denotes a set of positive roots for a classical Lie algebra $\mathfrak{g}$. In combinatorial terms, Kostant's 
partition function counts the number of ways one can express a vector, referred to as a weight, as an integral 
sum of positive roots. Given a weight $\xi$, we denote this count by $\wp(\xi)$.

As Kostant's partition function is an integral part of Kostant's weight multiplicity formula, 
previous bodies of work have involved combinatorial \emph{approaches to computing} the value of the partition function and weight 
multiplicities 
\cite{BZ2,BGR,deckhart,FP,Gupta,PH,PH2,PH3,Tate}. In addition, there have been notable bodies of work which relate Kostant's partition 
function to 
flow polytopes and volume computation of polytopes
\cite{BBCV,Baldoni,MM}.
However, as stated by Baldoni and Vergne, ``[i]n general, it is difficult to give ``concrete'' formulae for the partition functions'' 
\cite{Baldoni} and much work has be done to construct efficient programs to compute their values \cite{BBCV,cochet,BS}. In fact, 
Kostant's partition function is only one from a class of more general 
\emph{vector partition functions}. As Beck and Robins point out, vector partition functions ``have many interesting properties 
and 
give rise to intriguing open questions,''   \cite{BeckRobins}.

The connection to representation theory stems from Kostant's use of his partition function to give another expression of the Weyl 
character formula, which gives the character of an irreducible representation $V$ of a complex semi-simple Lie algebra 
$\mathfrak{g}$. The Weyl character formula is given by
\begin{align*}
\text{ch}(V)&=\frac{\sum_{w\in W}(-1)^{\ell(w)}e^{w(\lambda+\rho)}}{e^\rho\prod_{\alpha\in \Phi^+}(1-e^{-\alpha})} .
\end{align*}

We now recall that Kostant's weight multiplicity formula gives the multiplicity of the weight $\mu$ in the finite-dimensional 
representation with highest weight $\lambda$ is given by
\begin{align}
m(\lambda,\mu)&=\sum_{w\in W}(-1)^{\ell(w)}\wp(w(\lambda+\rho)-(\mu+\rho)),\label{KWMF}
\end{align}
where $\wp$ denotes Kostant's partition function~\cite{KMF}. However, using Kostant's weight multiplicity formula to compute 
weight 
multiplicities is often extremely cumbersome. The first difficulty in using Equation \eqref{KWMF} to compute weight multiplicities 
is that the order of the Weyl group grows factorially as the Lie algebra's rank increases. Secondly, for fixed $\lambda$ and $\mu$ 
it is unknown for which elements $w$ of the Weyl group, the value of $\wp(w(\lambda+\rho)-(\mu+\rho))$ is strictly positive. 
However, in 2015 Harris, Insko, and Williams (for all finite-dimensional classical Lie algebras) described and enumerated the 
elements of the Weyl group which provide positive partition 
function values in the cases where $\lambda$ is the highest root and $\mu$ is the zero weight \cite{HIW}. 
These sets of Weyl group elements were called \emph{Weyl alternation sets}.

The reason one is interested in this computation is to lay the foundation for approaching a much more difficult question: Can one use purely combinatorial techniques to prove that for any finite-dimensional simple Lie algebra $\mathfrak{g}$,
\begin{align}m_q(\tilde{\alpha},0)&=\displaystyle\sum_{w\in W}(-1)^{\ell(w)}\wp_q(w(\tilde{\alpha}+\rho)-\rho)=q^{e_1}+q^{e_2}+\cdots+q^{e_r},\end{align}
where $\tilde{\alpha}$ is the highest root and $e_1,e_2,\ldots,e_r$ denote the exponents of $\mathfrak{g}$?
The above result stems from work of Kostant and the definition of the $q$-analog  provided by Lusztig in~\cite{Kostant}, 
\cite[Section 10, p.~226]{LL}. We note that a purely combinatorial proof for Lie type $A$ 
was given by the first author in 2011~\cite{PH}.

The goal of this paper is to continue on the quest to recover the exponents of the classical Lie algebras via a purely combinatorial proof of Kostant's result. 
However, as we have previously stated, the remaining road block is to find closed formulas for the value of the partition function on the elements of the Weyl 
alternation sets.
With this aim in mind, we focus on determining $\wp_q(w(\tilde{\alpha}+\rho)-\rho)$ in the case where $w$ is the identity element 
of the Weyl group, $\hroot$ is the highest root of the Lie algebra $\mathfrak{g}$, and where $\wp_q$ denotes the $q$-analog of 
Kostant's partition function as introduced by Lusztig as 
the polynomial-valued function
\begin{align}\wp_q(\xi)=a_0+a_1q+a_2q^2+a_3q^3+\cdots+a_kq^k\end{align}
where $a_i$ is the number of ways to write the weight $\xi$ as a sum of exactly $i$ positive roots~\cite{LL}. 

In this work we are specifically interested in the value of the $q$-analog of Kostant's partition function 
when the weight in question is the highest root of a finite-dimensional classical Lie algebra. 
We remark that the type $A_r$ case yields $\wp_{A_r}(\hroot)=q(1+q)^{r-1}$.
We provide generating formulas for the value of $\wp_q(\tilde{\alpha})$ in the case where $\hroot$ is the highest root of the Lie algebras $B_r$, $C_r$,
and $D_r$. To specify which Lie algebra we are considering we let $\P_{B_r}(q)$, $\P_{C_r}(q)$, and $\P_{D_r}(q)$ denote $\wp_q(\hroot)$, 
in the Lie algebras of type $B_r$, $C_r$, and $D_r$, respectively. Hence the main results of this paper (Theorems 
\ref{Bgenfun}, \ref{Cgenfun}, and \ref{thm:DGenFun}) states:

\begin{theorem*}[Generating Functions]\label{mainresult} The closed formulas for the generating functions\\ $\textstyle\sum_{r\geq 1}\P_{B_{r}}(q)x^r$, $\textstyle\sum_{r\geq 1}\P_{C_{r}}(q)x^r$, and $\textstyle\sum_{r\geq 4}\P_{D_{r}}(q)x^r$,  are given by
\begin{align*}
\displaystyle\sum_{r\geq 1}\P_{B_r}(q)x^r&=\frac{qx+(-q-q^2)x^2+q^2x^3}{1-(2+2q+q^2)x+(1+2q+q^2+q^3)x^2},\\
\displaystyle\sum_{r\geq 1}\P_{C_r}(q)x^r&=\frac{qx+(-q-q^2)x^2}{1-(2+2q+q^2)x+(1+2q+q^2+q^3)x^2},\\
\sum_{r \geq 4} \P_{D_r}(q)x^r &= \dfrac{(q+4q^2+6q^3+3q^4+q^5)x^4 - (q+4q^2+6q^3+5q^4+3q^5+q^6)x^5}{1-(2+2q+q^2)x+(1+2q+q^2+q^3)x^2}.
\end{align*}
\end{theorem*}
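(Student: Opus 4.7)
The plan is to establish a common second-order linear recurrence satisfied by $\P_{B_r}(q)$, $\P_{C_r}(q)$, and $\P_{D_r}(q)$ (for $r$ beyond a type-dependent threshold), compute the initial values from first principles, and then assemble the generating functions by standard manipulations. The three target generating functions share the denominator
\[D(x,q) = 1 - (2+2q+q^2)\,x + (1+2q+q^2+q^3)\,x^2,\]
so the theorem is equivalent to proving the single recurrence
\[\P_{X_r}(q) \;=\; (2+2q+q^2)\,\P_{X_{r-1}}(q) \;-\; (1+2q+q^2+q^3)\,\P_{X_{r-2}}(q)\]
for $X \in \{B,C,D\}$ and for $r \geq 4$, $r \geq 3$, $r \geq 6$ respectively, together with matching the claimed numerators against the finitely many initial terms.

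For the recurrence I would proceed type-by-type with a stratification of decompositions of $\tilde{\alpha}$ according to their interaction with coordinate $r$. Fix $X=B$ for concreteness and write $\tilde{\alpha}=e_1+e_2$. The positive roots of $B_r$ are $\{e_i:1\le i\le r\}\cup\{e_i\pm e_j:1\le i<j\le r\}$, and since $\tilde{\alpha}$ has coefficient $0$ in position $r$, the multiplicities of roots containing $+e_r$ (namely $e_r$ and $e_j+e_r$ for $j<r$) must balance those containing $-e_r$ (namely $e_j-e_r$ for $j<r$). I would classify decompositions by the submultiset of roots touching coordinate $r$: an empty such submultiset corresponds exactly to a decomposition of $\tilde{\alpha}$ in $B_{r-1}$, contributing $\P_{B_{r-1}}(q)$; a single balancing pair — of one of the shapes $(e_j-e_r,\,e_r)$, $(e_j-e_r,\,e_j+e_r)$, or $(e_j-e_r,\,e_k+e_r)$ with $j\neq k$ — leaves a residual vector that embeds into a decomposition in $B_{r-1}$ or $B_{r-2}$. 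Weighting each case by $q^{\text{size}}$ and using an inclusion-exclusion correction for multi-pair blocks should yield the two-term recurrence. The analogous arguments for $C_r$ (with $\tilde{\alpha}=2e_1$ and the extra roots $2e_i$) and for $D_r$ (with $\tilde{\alpha}=e_1+e_2$ and no short roots) follow the same template, although type $D$ requires additional care because the two simple roots $e_{r-1}-e_r$ and $e_{r-1}+e_r$ play symmetric roles — this is presumably why the recurrence only stabilizes at $r\geq 6$.

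The initial values $\P_{B_1},\P_{B_2},\P_{B_3}$; $\P_{C_1},\P_{C_2}$; and $\P_{D_4},\P_{D_5}$ can be computed directly by enumerating decompositions. For example, $\P_{B_1}(q)=q$, and $\P_{B_2}(q)=q+q^2+q^3$ (corresponding to the decompositions $(e_1+e_2)$, $(e_1)+(e_2)$, and $(e_1-e_2)+(e_2)+(e_2)$). Once the recurrence and the initial values are in place, the theorem follows by multiplying each proposed generating function identity through by $D(x,q)$ and comparing coefficients of $x^r$ on both sides, a purely mechanical check.

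The principal obstacle will be the delicate combinatorial bookkeeping in the stratification: ensuring each decomposition is counted exactly once with the correct $q$-weight and identifying the residual with a \emph{genuine} decomposition of $\tilde{\alpha}$ in strictly smaller rank. A cleaner alternative, worth exploring if the bijective argument becomes unwieldy, would be to extract the coefficient of $e^{\tilde{\alpha}}$ from the product formula
\[\sum_{\mu}\wp_q(\mu)\,e^{\mu} \;=\; \prod_{\alpha\in\Phi^+}\frac{1}{1-q\,e^{\alpha}}\]
by factoring off the subproduct over roots involving $e_r$ and computing the resulting constant term in $e^{e_r}$ via partial fractions; this would yield the recurrence algebraically, at the cost of some symbolic manipulation.
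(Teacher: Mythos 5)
Your overall architecture (two-term recurrence plus initial conditions plus a mechanical generating-function check) is sound, and the recurrence you state, with the thresholds $r\geq 4$, $r\geq 3$, $r\geq 6$, is indeed equivalent to the theorem. But the heart of the proof --- actually establishing that recurrence --- is left as a sketch, and the sketch has concrete unresolved problems. In your orthogonal-coordinate stratification for $B_r$, removing a balancing pair such as $(e_j-e_r,\,e_k+e_r)$ from a decomposition of $\tilde{\alpha}=e_1+e_2$ leaves the remaining parts summing to $e_1+e_2-e_j-e_k$, which for general $j,k$ is \emph{not} the highest root of a smaller-rank algebra, so its decompositions are not counted by $\P_{B_{r-1}}(q)$ or $\P_{B_{r-2}}(q)$; if instead you intend to glue each pair into the single root $e_j+e_k$ (or $e_j$), the map is many-to-one in a way you have not controlled, and it breaks down entirely when $j=k$, since $2e_j$ is not a root of $B_r$. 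You also need to handle configurations with two balancing pairs (e.g.\ $(e_1-e_r)+(e_2-e_r)+e_r+e_r$), which you wave at with ``an inclusion-exclusion correction for multi-pair blocks.'' Nothing in the proposal verifies that all of this bookkeeping produces exactly the coefficients $2+2q+q^2$ and $1+2q+q^2+q^3$; the phrase ``should yield the two-term recurrence'' is where the proof actually has to live.

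It is worth knowing that the paper does not prove the two-term recurrence combinatorially either --- arguably because the natural combinatorial recursions are not two-term. Working in simple-root coordinates (where $\tilde{\alpha}$ has coefficient $2$ on $\alpha_r$ rather than $0$ on $e_r$), the paper extends each partition in rank $r-1$ to rank $r$ in four ways and corrects the overcount, obtaining \emph{full-history} recurrences such as $\P_{B_r}^{NH}(q)=(1+q)^2\P_{B_{r-1}}^{NH}(q)-q^3\sum_{i=1}^{r-2}\P_{B_i}^{NH}(q)$ (splitting type $B$ into hooked and nonhooked contributions, and handling hooked parts by a separate double-sum recursion); the two-term recurrence only emerges after summing these into generating functions. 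Type $D$ is not handled by an internal recurrence at all, but by the identity $\P_{D_{r+2}}(q)=\P_{B_r}^H(q)+q^2\P_{B_r}^{NH}(q)+(2q+2)\bigl(2\P_{B_r}^{NH}(q)-q^2\sum_{i=1}^{r-1}\P_{B_i}^{NH}(q)\bigr)$. So to complete your argument you would either need to carry out the pair-gluing analysis in full (including the $j=k$ and two-pair cases and the overcount corrections), or retreat to full-history recurrences of the paper's type, at which point you have essentially reproduced its proof.
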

Using classical techniques from generating functions we extract the following explicit formulas.
\begin{corollary}[Explicit Formulas]\label{corclosed}
The following are explicit formulas for the value of the $q$-analog of Kostant's partition function on the highest root of the classical Lie algebras:
\begin{align*}
\text{Type $B_r$}\; (r\geq 2):&\hspace{3mm}\P_{B_r}(q)=\alpha_1(q) \cdot \left(\beta_1(q)\right)^{r-2} + \alpha_2(q) \cdot \left(\beta_2(q)\right)^{r-2},\\
\text{Type $C_r$}\; (r\geq 1):&\hspace{3mm}\P_{C_r}(q)=\gamma_1(q) \cdot \left(\beta_1(q)\right)^{r-1} + \gamma_2(q) \cdot \left(\beta_2(q)\right)^{r-1},\\
\text{Type $D_r$}\; (r\geq 4):&\hspace{3mm}\P_{D_r}(q)=\delta_1(q) \cdot \left(\beta_1(q)\right)^{r-4} + \delta_2(q) \cdot \left(\beta_2(q)\right)^{r-4},
\end{align*}
where 
\[
\beta_1(q) = \dfrac{2+2q+q^2+q\sqrt{q^2+4}}{2}, \hspace{0.1in} \beta_2(q) = \dfrac{2+2q+q^2-q\sqrt{q^2+4}}{2}
\]
and 
\begin{align*}
\alpha_1(q) &= q \cdot \dfrac{q^4 +q^3\left(\sqrt{q^2+4}+1\right)+ q^2\left(\sqrt{q^2+4}+5\right) + q\left(3\sqrt{q^2+4}+4\right)+2\left(\sqrt{q^2+4}+2\right)}{2(q^2+4)}, \\
\alpha_2(q) &= q \cdot \dfrac{q^4- q^3\left(\sqrt{q^2+4}-1\right)-q^2\left(\sqrt{q^2-4}-5\right)+q\left(4-3\sqrt{q^2+4}\right)-2\left(\sqrt{q^2+4}-2\right) }{2(q^2+4)},\\
\gamma_1(q) &= \dfrac{q\left(q^2 + q\sqrt{q^2+4}+4\right)}{2(q^2+4)}, \hspace{5mm} \gamma_2(q) = \dfrac{q\left(q^2 - q\sqrt{q^2+4}+4\right)}{2(q^2+4)},\\
\delta_1(q) &=q\cdot\frac{ 2 + 9 q + 12 q^2 + 8 q^3 + 3 q^4 + q^5 +  (1 + 4 q + 6 q^2 + 3 q^3 + q^4)\sqrt{q^2+4}}{2 \sqrt{
 q^2+4}},\\
\delta_2(q) &=q\cdot\frac{-2 - 9 q - 12 q^2 - 8 q^3 - 3 q^4 - q^5 +(1 + 4 q + 6 q^2 + 3 q^3 + q^4)\sqrt{q^2+4}}{2\sqrt{q^2+4}}.
\end{align*}
\end{corollary}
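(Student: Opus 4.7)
The plan is to extract the closed formulas from the three generating functions by the standard method of partial fractions, or equivalently by solving the common linear recurrence encoded in the denominator. All three generating functions share the denominator $D(x,q) = 1 - (2+2q+q^2)x + (1+2q+q^2+q^3)x^2$, so the first step is to factor this once. Writing $D(x,q) = (1-\beta_1(q)x)(1-\beta_2(q)x)$ and applying the quadratic formula to $t^2 - (2+2q+q^2)t + (1+2q+q^2+q^3) = 0$, one obtains $\beta_{1,2}(q) = \tfrac{1}{2}\bigl((2+2q+q^2) \pm \sqrt{(2+2q+q^2)^2 - 4(1+2q+q^2+q^3)}\bigr)$. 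A short algebraic simplification shows that the discriminant is $(2+2q+q^2)^2 - 4(1+2q+q^2+q^3) = q^2(q^2+4)$, so the inner square root simplifies to $q\sqrt{q^2+4}$, producing exactly the $\beta_{1,2}(q)$ stated in the corollary.

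Next, for each of types $B$, $C$, and $D$, the generating function identity translates directly into the linear recurrence $\P_{X_r}(q) = (2+2q+q^2)\P_{X_{r-1}}(q) - (1+2q+q^2+q^3)\P_{X_{r-2}}(q)$ valid for all sufficiently large $r$, whose characteristic roots are precisely $\beta_1(q)$ and $\beta_2(q)$. Hence the general solution takes the form $c_1(q)\beta_1(q)^{r-k} + c_2(q)\beta_2(q)^{r-k}$, with the shift $k$ chosen to match the indexing of each family: $k=2$ for type $B$, $k=1$ for type $C$, and $k=4$ for type $D$, agreeing with the range of $r$ for which each formula is asserted.

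To pin down the pair $(c_1(q), c_2(q))$ for each type, I would compute two consecutive initial values of $\P_{X_r}(q)$ directly from the generating function, by expanding its Taylor series in $x$ to the required order, and then solve a $2 \times 2$ linear system. For type $C$, the initial values $\P_{C_1}(q)$ and $\P_{C_2}(q)$ determine $\gamma_1(q), \gamma_2(q)$; for type $B$, the values $\P_{B_2}(q)$ and $\P_{B_3}(q)$ determine $\alpha_1(q), \alpha_2(q)$; and for type $D$, the values $\P_{D_4}(q)$ and $\P_{D_5}(q)$ determine $\delta_1(q), \delta_2(q)$. In each case the system is uniquely solvable because the Vandermonde determinant $\beta_1 - \beta_2 = q\sqrt{q^2+4}$ is nonzero as a function of $q$.

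The main obstacle is not conceptual but algebraic: the $2 \times 2$ systems involve $\sqrt{q^2+4}$ together with several polynomial products in $q$, and reducing the solutions to the forms displayed in the corollary requires careful rationalization of denominators (multiplying through by the conjugate factor $q\sqrt{q^2+4}$) and consolidation of like terms in $q$. A useful sanity check throughout is that $c_1(q)$ and $c_2(q)$ must be interchanged by the Galois involution $\sqrt{q^2+4} \mapsto -\sqrt{q^2+4}$, a symmetry manifest in the explicit expressions given for $\alpha_i(q)$, $\gamma_i(q)$, and $\delta_i(q)$, and it is worth verifying that the $r = 1$ (respectively $r = 2, 4$) specializations recover $\P_{C_1}(q) = q$, $\P_{B_2}(q)$, and $\P_{D_4}(q)$ before declaring the simplification finished.
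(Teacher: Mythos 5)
Your proposal is correct and follows essentially the same route as the paper's own proof in the appendix: factor the common quadratic denominator (the discriminant indeed simplifies to $q^2(q^2+4)$), convert each generating function to the second-order linear recurrence with characteristic roots $\beta_1(q),\beta_2(q)$, and solve a $2\times 2$ system using the same initial values $\P_{B_2},\P_{B_3}$, $\P_{C_1},\P_{C_2}$, and $\P_{D_4},\P_{D_5}$. No substantive differences.
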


The proof of Corollary~\ref{corclosed} is provided in Appendix~\ref{appendix}. We remark that evaluating $q=1$ in the above explicit functions yields closed formulas for the value of Kostant's partition function on the highest root of the classical Lie algebras considered. These are new closed formulas for the number of ways one can express the highest root of a classical Lie algebra as a nonnegative integral sum of positive roots. 

Observe that for the Lie algebras of type $B$ and $C$ the associated generating functions (with $q=1)$ were used by Butler and Graham to 
count the number of multiplex juggling sequences\footnote{OEIS sequence \href{http://oeis.org/A136775}{A136775}.} of length $n$, 
base state $<1,1>$ and hand capacity 2, and the number of periodic multiplex juggling sequences\footnote{OEIS sequence 
\href{http://oeis.org/A081567}{A081567}.} of length $n$ with base state $<2>$, respectively~\cite{juggle}. We list 
these generating functions in Table~\ref{genfunc}. While the generating functions agree, we do not provide a bijective argument 
for this observation.  Hence we would welcome a proof of this connection.

\begin{table}[h!]
\centering

\begin{tabular}{|c|c|N}
\hline
Type & Generating function&\\ 
\hline
$B$ &$\frac{x-2x^2+x^3}{1-5x+5x^2}$ &\\[20pt]
\hline
$C$&$\frac{x-2x^2}{1-5x+5x^2}$&\\[20pt]
\hline
\end{tabular}
\caption{Generating functions for the value of $\wp(\hroot)$ in Lie types $B$ and $C$}
\label{genfunc}
\end{table}
We note that this is not the first time that the mathematics of juggling provides insight into such computations. For example, 
Ehrenborg and Readdy used juggling patterns as an application to compute $q$-analogs. In 
particular, they used juggling patterns 
to compute the Poincar\'{e} series of the affine Weyl group $\tilde{A}_{r-1}$ \cite[Corollary~4.3]{Readdy}.

This paper is organized as follows: Section \ref{Background} contains the necessary background about Lie algebras to make our approach precise.  Sections \ref{type B} and \ref{type C} contain recursive formulas that count the number of partitions of the highest root $\tilde{\alpha}$ in types $B_r$ and $C_r$ (respectively) as sums of positive roots. These recursions are used to derive the closed forms for the generating functions  $\textstyle\sum_{r \geq 1} \P_{B_r}(q)x^r$ in Theorem \ref{Bgenfun} and $\textstyle\sum_{r \geq 1}^{} \P_{C_r}(q)x^r$ in Theorem \ref{Cgenfun}. Finally in Section \ref{type D} we describe how the total number of partitions of the highest root as sums of positive roots in type $D_r$ can be calculated using the partitions of the highest root in type $B_{r-2}$, the content of Theorem \ref{thm:Dthm}. This leads to the derivation of a closed formula for the generating function $\textstyle\sum_{r \geq 4} \P_{D_r}(q)x^r$ in Theorem \ref{thm:DGenFun}.

\section{Background} \label{Background}
The structure of classical root systems has been extensively studied and completely classified up to isomorphism.  
We follow the conventions and choices of vector space bases set forth in Goodman and Wallach's text,~\cite[Section 2.4.3]{GW}.

Let $\Phi$ be the root system for a Lie algebra $\mathfrak g$ of type $A_r$, $B_r$, $C_r$, or $D_r$.  

\begin{definition} \label{def:simpleroots}
 A subset $\Delta = \{ \alpha_1, \alpha_2, \ldots, \alpha_r \} \subset \Phi$ is a set of \emph{simple roots} if every root in $\beta \in \Phi$ can be written uniquely as
 \[ \beta = c_1 \alpha_1 + c_2 \alpha_2 + \cdots + c_r \alpha_r\]
\end{definition}
\noindent with all the $c_i$'s having the same sign.  Choosing a set of simple roots $\Delta$ partitions $\Phi$ into two disjoint subsets $\Phi = \Phi^+ \cup \Phi^{-}$ of positive roots $\Phi^{+}$ and negative roots $\Phi^{-}$, 
where $\Phi^{+}$ is the collection of roots where $c_i \geq 0$ and $\Phi^{-} $ 
is the set of roots with $c_i \leq 0$ for all $\alpha_i \in \Delta$.  
If $\Delta \subset \Phi$ is a subset of simple roots and $\beta = c_1 \alpha_1 + c_2 \alpha_2 + \cdots + c_r \alpha_r$ is a root, then the \emph{height} of $\beta$ is
\[ ht(\beta) = c_1 + c_2 + \cdots + c_r .\]  Naturally the positive roots are those $\beta \in \Phi$ with $ht(\beta) >0$.  
In the classical Lie algebras, there is a unique \emph{highest root} $\tilde{\alpha} $ defined by the property $ht(\tilde{\alpha}) 
\geq ht(\beta)$ for all $\beta \in \Phi$.

Following the conventions set in \cite[Section 2.4.3]{GW}, we now describe a choice of simple roots, list the positive roots, and specify the corresponding highest root in Lie types $B_r,\ C_r,
$ and $D_r$.\\

\noindent
\underline{\textbf{Type $B_r$ ($\mathfrak{so}_{2r+1}(\mathbb{C})$):}}
Let $r\geq 2$ and let $\Delta = \{\alpha_i \ | \ 1 \leq i \leq r \}$ be a set of simple roots. 
We describe the set of positive roots $\Phi^+$ by breaking them up into roots we refer to as \emph{hooked} and \emph{nonhooked}.  
We define the set of \emph{nonhooked positive roots} to be \[ \Phi_{B_r^{NH}}^+=\Delta\cup\left \{\alpha_i+\alpha_{i+1}+ \cdots +\alpha_j \mid  1\leq i  < j \leq r \right \}  , \]
and we define the set of \emph{hooked positive roots} to be 
\[ \Phi_{B_r^H}^+=\left \{\alpha_i+\alpha_{i+1}+ \cdots + \alpha_{j-1}+2 \alpha_j+2 \alpha_{j+1}+\cdots +2\alpha_r \mid   1\leq i  < j \leq r \right \}  . \]
Note the highest root of type $B_r$ is $\tilde{\alpha}=\alpha_1+2\alpha_2+\cdots+2\alpha_r$ and $\Phi^+= \Phi_{B_r^H}^+ \sqcup 
\Phi_{B_r^{NH}}^+$.  
\vspace{12pt}

\noindent
\underline{\textbf{Type $C_r$ ($\mathfrak{sp}_{2r}(\mathbb{C})$):}}
Let $r\geq 2$ and let $\Delta = \{\alpha_i \ | \ 1 \leq i \leq r \}$, be a set of simple roots. 
Again we break $\Phi^+$ into the set of hooked and nonhooked roots. 
We define the set of \emph{nonhooked positive roots} to be \[ \Phi_{C_r^{NH}}^+=\Delta\cup\left \{\alpha_i+\alpha_{i+1}+ \cdots +\alpha_j \mid   1\leq i  < j \leq r \right \}  , \]
and we define the set of \emph{hooked positive roots} to be 
\[ \Phi_{C_r^H}^+=\left \{\alpha_i+\alpha_{i+1}+ \cdots+ \alpha_{j-1}+2 \alpha_j+2 \alpha_{j+1}+\cdots +2 \alpha_{r-1}+\alpha_r \mid   1\leq i  < j \leq r-1 \right \}  . \]

Note that the highest root is $\tilde{\alpha}=2\alpha_1+2\alpha_2+\cdots+2\alpha_{r-1}+\alpha_r$ and $\Phi^+= \Phi_{C_r^H}^+ \sqcup \Phi_{C_r^{NH}}^+$.
\vspace{12pt}

\noindent
\underline{\textbf{Type $D_r$ ($\mathfrak{so}_{2r}(\mathbb C)$):}}  Let $r\geq 4$ and let $\Delta = \{\alpha_i \ | \ 1 \leq i \leq r \}$ be a choice of a set of simple roots. 
We define the set of \emph{nonhooked positive roots} to be \[ \Phi_{D_r^{NH}}^+=\Delta\cup\left \{\alpha_i+ \cdots +\alpha_j \mid   1\leq i  < j \leq r \right \} \cup \left \{\alpha_i+\alpha_{i+1}+ \cdots +\alpha_{r-2}+\alpha_r \mid   1\leq i  \leq r-2 \right \}  , \]
and we define the set of \emph{hooked positive roots} to be 
\[ \Phi_{D_r^H}^+=\left \{\alpha_i+\alpha_{i+1}+ \cdots \alpha_{j-1}+2 \alpha_j+2 \alpha_{j+1}+\cdots +2 \alpha_{r-2}+\alpha_{r+1}+\alpha_r \mid   1\leq i  < j \leq r-2 \right \}  . \]

Note that the highest root of type $D_r$ is $\tilde{\alpha}= \alpha_1+2\alpha_2+\cdots+2\alpha_{r-2}+\alpha_{r-1}+\alpha_r$ and 
the set of positive roots 
is the disjoint union  $\Phi^+= \Phi_{D_r^H}^+ \sqcup \Phi_{D_r^{NH}}^+$.

\begin{definition} \label{def:partition}
A \emph{partition} of the highest root $\tilde{\alpha}$ in $\Phi^+$ is a multiset $\{ \beta_1, \beta_2, \ldots, \beta_k\} = \Gamma $ such that $\beta_i\in \Phi^{+}$ for all $1\leq i\leq k$ and $\tilde{\alpha} = \beta_1+\beta_2 + \cdots + \beta_k$.  
The elements $\beta\in\Gamma$ are called the \emph{parts} of the partition $\Gamma$.
\end{definition}

\subsection{Coordinate Vector Notation} \label{subsection:coordinate}

Let $e_i = ( 0,0, \ldots, 1 , 0 , 0, \ldots) $ denote the $i$th standard basis vector of $\CC^\infty$.  
For each classical Lie algebra $\mathfrak g$ with set of roots $\Phi_{\mathfrak{g}}$ we let $\Psi: \Phi_{\mathfrak{g}}\rightarrow \CC^\infty$ be the map defined on the simple roots of the Lie algebras of type $B_r$ and $D_r$ via $\Psi(\alpha_i) = e_i$ and for the Lie algebra of type $C_r$ via $\Psi(\alpha_i)=e_{r-i+1}$.  We extend this map linearly to the roots in $\Phi_{\mathfrak{g}}$. In doing this, we relate the coefficients of $\P_{B_r}(q),\ \P_{C_r}(q),\ \P_{D_r}(q)$ respectively to counting the number of ways a specific vector in $\CC^\infty$ can be expressed as a linear combination of vectors from a fixed set, where weights are required to be nonnegative integers, as encapsulated in the following lemmas.  The proofs of these lemmas are immediate.

\begin{lemma}\label{lem:typeB}
Let $r,k \geq 1$ be integers.  The number of partitions of the highest root $\hr$ of the Lie algebra of type $B_r$ with $k$ parts is equal to the number of ways of writing the vector \[ \Psi(\hr) = e_1+2e_2+2e_3+\cdots + 2e_r =( \underbrace{\ 1,\ 2,\ 2,\ \ldots,\ 2}_{r \mbox{ nonzero entries }},0,0,\ldots) \in \mathbb{C}^{\infty} \] as a nonnegative integer combination of $k$ of the following vectors:
\begin{itemize}
\item Nonhooked vectors (parts) of the form $e_i$ with $1 \leq i \leq r$.  
\item Nonhooked vectors (parts) of the form $e_i +e_{i+1}+\cdots + e_j$ with $1 \leq i < j \leq r$.  
\item Hooked vectors (parts) of the the form  $e_i + e_{i+1}+\cdots + e_{j-1} +2e_{j} + 2e_{j+1} +\cdots + 2e_r$ with $1 \leq i < j \leq r$.
\end{itemize}
\end{lemma}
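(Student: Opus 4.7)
The plan is to view $\Psi$ as a linear bijection from the integer span of the simple roots $\Delta = \{\alpha_1,\ldots,\alpha_r\}$ onto $\mathbb{Z}e_1 \oplus \cdots \oplus \mathbb{Z}e_r$, and then transport the partition-counting problem across this bijection. Concretely, the first step is to note that since $\Psi$ sends the basis $\{\alpha_1,\ldots,\alpha_r\}$ to the linearly independent set $\{e_1,\ldots,e_r\}$ and is extended by linearity, it is injective on the lattice of integer combinations of simple roots. In particular, for any positive roots $\beta,\beta'$, the equality $\Psi(\beta)=\Psi(\beta')$ forces $\beta=\beta'$, and more generally a relation $\sum_\beta c_\beta \beta = \tilde{\alpha}$ with $c_\beta \in \mathbb{Z}_{\geq 0}$ holds if and only if $\sum_\beta c_\beta \Psi(\beta) = \Psi(\tilde{\alpha})$ holds.

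Next, I would rephrase the notion of a partition of $\tilde{\alpha}$ with $k$ parts in terms of such coefficient vectors: a multiset $\{\beta_1,\ldots,\beta_k\} \subset \Phi^+$ with $\beta_1+\cdots+\beta_k = \tilde{\alpha}$ is precisely the data of a tuple $(c_\beta)_{\beta \in \Phi^+} \in \mathbb{Z}_{\geq 0}^{\Phi^+}$ satisfying $\sum_\beta c_\beta = k$ and $\sum_\beta c_\beta \beta = \tilde{\alpha}$, where $c_\beta$ records the multiplicity of $\beta$ in the multiset. Applying $\Psi$ and using the injectivity observation above gives a bijection between partitions of $\tilde{\alpha}$ with $k$ parts and expressions of $\Psi(\tilde{\alpha})$ as a sum of exactly $k$ vectors (with repetition allowed) taken from $\Psi(\Phi^+_{B_r})$.

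It then remains to verify that $\Psi(\tilde{\alpha})$ and $\Psi(\Phi^+_{B_r})$ match the vectors listed in the lemma. This is a direct calculation: linearity together with $\Psi(\alpha_i)=e_i$ gives $\Psi(\tilde{\alpha}) = e_1 + 2e_2 + \cdots + 2e_r$, and applied term-by-term to the descriptions of $\Phi^+_{B_r^{NH}}$ and $\Phi^+_{B_r^H}$ yields exactly the three bullet-pointed families of vectors. Combining the bijection above with this identification completes the proof.

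I do not anticipate any real obstacle here, in agreement with the authors' remark that the proof is immediate; the only thing to be careful about is the bookkeeping of what counts as a part (multiset versus tuple), which is handled cleanly by passing to the coefficient-tuple description $(c_\beta)_{\beta \in \Phi^+}$ before applying $\Psi$. The analogous arguments for types $C_r$ and $D_r$ will follow the same template, with only the definition of $\Psi$ on simple roots and the explicit formulas for $\tilde{\alpha}$ and $\Phi^+$ changing.
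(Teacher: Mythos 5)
Your proof is correct and matches the paper's intent exactly: the paper simply declares these lemmas "immediate," and your argument --- injectivity of $\Psi$ on the root lattice, the coefficient-tuple reformulation of a $k$-part partition, and the term-by-term computation of $\Psi$ on $\tilde{\alpha}$ and on $\Phi^+_{B_r}$ --- is precisely the elaboration being omitted. No gaps.
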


\begin{lemma}\label{lem:TypeC}
Let $r,k \geq 1$ be integers.  The number of partitions of the highest root $\hr$ of the Lie algebra of type $C_r$ with $k$ parts is equal to the number of ways of writing the vector 
\[\Psi(\hr)= e_1+2e_2+\cdots + 2e_{r-1}+2e_{r} = (\underbrace{\ 1, \ 2,\ 2,\ \ldots,\ 2, }_{r \mbox{ nonzero entries }}0,0,\ldots) 
\in \mathbb{C}^{\infty} \]
 as a nonnegative integer combination of $k$ of the following vectors:
\begin{itemize}
\item Nonhooked vectors (parts) of the form $e_i $ with $1 \leq i  \leq r$. 
\item Nonhooked vectors (parts) of the form $e_i +e_{i+1}+\cdots + e_j$ with $1 \leq i \leq j \leq r$. 
\item Hooked vectors (parts) of the form $e_1 + 2e_{2} + \cdots + 2e_{i} +e_{i+1} + \cdots + e_j$ with $1 < i < j \leq r$.  
\end{itemize}
\end{lemma}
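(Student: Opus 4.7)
The plan is to prove Lemma~\ref{lem:TypeC} by showing that the linear map $\Psi\colon \Phi_{C_r}\to \CC^\infty$, restricted to the positive roots, induces term-by-term a bijection between $k$-part partitions of the highest root $\hr$ and $k$-term nonnegative integer representations of $\Psi(\hr)$ using vectors drawn from the three listed families. Since $\Psi(\alpha_i)=e_{r-i+1}$ sends the simple basis $\Delta=\{\alpha_1,\ldots,\alpha_r\}$ bijectively onto $\{e_r,\ldots,e_1\}$, the linear extension of $\Psi$ is injective on $\mathrm{span}_{\RR}(\Delta)$. Consequently two multisets of positive roots have equal sums in $\mathrm{span}_{\RR}(\Delta)$ if and only if their images under $\Psi$ have equal sums in $\CC^\infty$, so the proof reduces to two verifications: (i) $\Psi(\hr)$ agrees with the target vector in the lemma, and (ii) $\Psi$ sends the three families $\Delta$, $\{\alpha_i+\cdots+\alpha_j:1\leq i<j\leq r\}$, and $\Phi_{C_r^H}^+$ bijectively onto the three vector families enumerated in the bullets.

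For (i), linearity applied to $\hr=2\alpha_1+\cdots+2\alpha_{r-1}+\alpha_r$ gives $\Psi(\hr)=2e_r+\cdots+2e_2+e_1=e_1+2e_2+\cdots+2e_r$, matching the stated vector. For (ii), I would introduce the reindexing $(i',j')=(r-j+1,\,r-i+1)$ dictated by the order-reversing permutation built into $\Psi$. A simple root $\alpha_i$ maps to $e_{r-i+1}$, so these images fill $\{e_1,\ldots,e_r\}$ and cover the first bullet. A nonhooked root $\alpha_i+\cdots+\alpha_j$ with $1\leq i<j\leq r$ maps to $e_{r-j+1}+\cdots+e_{r-i+1}=e_{i'}+\cdots+e_{j'}$ with $1\leq i'<j'\leq r$, covering the second bullet. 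A hooked root $\alpha_i+\cdots+\alpha_{j-1}+2\alpha_j+\cdots+2\alpha_{r-1}+\alpha_r$ with $1\leq i<j\leq r-1$ maps to $e_1+2e_2+\cdots+2e_{i'}+e_{i'+1}+\cdots+e_{j'}$, where the constraints $i<j$ and $j\leq r-1$ translate respectively into $i'<j'$ and $i'\geq 2$, matching the range $1<i'<j'\leq r$.

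Once (i) and (ii) are established, a partition $\{\beta_1,\ldots,\beta_k\}$ of $\hr$ pushes forward under $\Psi$ to a $k$-term sum of vectors from the three target families that equals $\Psi(\hr)$; conversely, any such $k$-term sum pulls back uniquely term-by-term through $\Psi^{-1}$ on $\Psi(\Phi_{C_r}^+)$, yielding a $k$-part partition of $\hr$. The argument is essentially bookkeeping, as the authors indicate when they call the proof ``immediate.'' The one potentially error-prone step is Step (ii): carefully matching the index ranges under the order-reversing map, in particular confirming that every hooked image begins with the fixed prefix $e_1+2e_2$ (since $\alpha_r$ and $2\alpha_{r-1}$ appear in every hooked root), which is precisely what forces $i'\geq 2$ in the third bullet.
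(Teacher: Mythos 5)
Your proposal is correct and is essentially the argument the paper has in mind: the authors simply declare the proofs of Lemmas 2.3--2.5 ``immediate,'' and what you have written is exactly the routine verification behind that claim --- injectivity of $\Psi$ on $\mathrm{span}(\Delta)$, the computation $\Psi(\hr)=e_1+2e_2+\cdots+2e_r$, and the index-reversal $(i',j')=(r-j+1,r-i+1)$ matching the three root families to the three bullet families. The only cosmetic discrepancy is that the lemma's second bullet allows $i\le j$ while your images of non-simple nonhooked roots have $i'<j'$, but the $i'=j'$ case is already covered by the first bullet, so nothing is missing.
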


\begin{lemma}\label{lem:TypeD}
Let $r,k \geq 1$ be integers.  The number of partitions of the highest root $\hr$ of the Lie algebra of type $D_r$ with $k$ parts is equal to the number of ways of writing the vector 
\[ \Psi(\hr)=e_1+2e_2+2e_3+\cdots + 2e_{r-2}+e_{r-1}+e_r = (\underbrace{\ 1,\ 2,\ 2,\ldots,\ 2,\ 1,\ 1}_{r \mbox{ nonzero entries }},0,0,\ldots) \in \mathbb{C}^{\infty} \] as a nonnegative integer combination of $k$ of the following vectors:
\begin{itemize}
\item Nonhooked vectors (parts) of the form $e_i$ with $1 \leq i \leq r$.  
\item Nonhooked vectors (parts) of the form $e_i +e_{i+1}+\cdots + e_{j}$ with $1 \leq i < j \leq r$. 
\item Nonhooked vectors (parts) of the form $e_i +e_{i+1}+\cdots + e_{r-2}+e_r$ with $1 \leq i  \leq r-2$. 
\item Hooked vectors (parts) of the the form  \[e_i+e_{i+1}+ \cdots+ e_{j-1}+2 e_j+2 e_{j+1}+\cdots +2 e_{r-2}+e_{r+1}+e_r \] with $  1\leq i  < j \leq r-2$.
\end{itemize}
\end{lemma}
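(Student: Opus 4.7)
The plan is to invoke the coordinate map $\Psi$ introduced in Section~\ref{subsection:coordinate} and observe that the statement follows directly from the definitions of the positive and highest roots of $D_r$ already recorded in Section~\ref{Background}. Concretely, $\Psi$ is defined on the simple roots of $D_r$ by $\Psi(\alpha_i) = e_i$ and extended $\mathbb{Z}$-linearly; since $\{e_1,\ldots,e_r\}$ are linearly independent in $\mathbb{C}^\infty$, the extension is injective on the integer span of $\Delta$, and in particular it is injective on the root lattice.

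First, I would compute $\Psi(\hr)$ directly from $\hr = \alpha_1 + 2\alpha_2 + \cdots + 2\alpha_{r-2} + \alpha_{r-1} + \alpha_r$ and check that it equals the target vector $e_1 + 2e_2 + \cdots + 2e_{r-2} + e_{r-1} + e_r$. Next, I would apply $\Psi$ to the explicit descriptions of $\Phi_{D_r^{NH}}^+$ and $\Phi_{D_r^H}^+$ given in Section~\ref{Background}; each of the four listed families of positive roots maps onto exactly one of the four listed families of vectors in the lemma, so the image $\Psi(\Phi^+)$ is precisely the set of vectors appearing in the bulleted list, and each root has a unique image (by injectivity).

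Finally, I would translate the definition of a partition through $\Psi$. By Definition~\ref{def:partition}, a partition of $\hr$ with $k$ parts is a multiset $\{\beta_1,\ldots,\beta_k\}\subset\Phi^+$ with $\hr = \beta_1 + \cdots + \beta_k$; equivalently, it is a tuple of nonnegative integers $(n_\beta)_{\beta\in\Phi^+}$ satisfying $\sum_{\beta} n_\beta \beta = \hr$ and $\sum_\beta n_\beta = k$. Applying the linear injection $\Psi$, such data is in one-to-one correspondence with nonnegative integer tuples $(n_v)_{v\in\Psi(\Phi^+)}$ satisfying $\sum_v n_v v = \Psi(\hr)$ and $\sum_v n_v = k$, which is exactly the count described in the statement.

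The only potential obstacle is the bookkeeping required to confirm that the four bulleted families in the lemma are the images under $\Psi$ of the nonhooked and hooked positive roots of $D_r$ listed in Section~\ref{Background}, but since $\Psi$ acts on simple roots by $\alpha_i \mapsto e_i$, this is a symbol-by-symbol comparison and poses no real difficulty. Hence the proof is essentially immediate, consistent with the authors' remark.
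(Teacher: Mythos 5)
Your proposal is correct and takes exactly the approach the paper intends: the paper offers no written proof beyond the remark that ``the proofs of these lemmas are immediate,'' and your argument --- computing $\Psi(\hr)$, matching each family in $\Phi_{D_r^{NH}}^+$ and $\Phi_{D_r^H}^+$ symbol-by-symbol with the bulleted vectors, and transporting the multiset definition of a partition through the linear injection $\Psi$ --- is precisely the omitted verification.
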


We remark that henceforth we use the words \emph{vector} and \emph{part} interchangeably. 

\section{Type $B$} \label{type B}

Let $\P_{B_r}(q):=\wp_q(\hr)$, where $\hr=\alpha_1+2\alpha_2+\cdots+2\alpha_r$ is the highest root of the Lie algebra of type $B_r$. By Lemma~\ref{lem:typeB}, we can recover $\P_{B_r}(q)$ by determining the number of partitions of $e_1+2e_2+\cdots+2e_r$ into $k$ parts where the parts are as specified in Lemma~\ref{lem:typeB}.  In this light, throughout this section, a \emph{partition} of the vector $e_1+2e_2+\cdots+2e_{\ell}$ in $\CC^{\infty}$ where $2 \leq \ell \leq r$ is a nonnegative integer combination of $k$ (not necessarily distinct) vectors from the following set:
 \begin{itemize}
\item Nonhooked vectors (parts) of the form $e_i$ with $1 \leq i \leq \ell$.  
\item Nonhooked vectors (parts) of the form $e_i +e_{i+1}+\cdots + e_j$ with $1 \leq i < j \leq \ell$.  
\item Hooked vectors (parts) of the the form  $e_i + e_{i+1}+\cdots + e_{j-1} +2e_{j} + 2e_{j+1} +\cdots + 2e_{\ell}$ with $1 \leq i < j \leq \ell$.
\end{itemize}
\noindent
The following example illustrates the computation of a partition function.

\begin{example} Let $\hr=e_1+2e_2+2e_3+2e_4$ be the highest root of the Lie algebra of type $B_5$. In Table \ref{tab:example} we 
provide the 40 ways in which $\hr$ can be expressed as a sum of nonhooked and hooked vectors of type $B_4$. 
From this we can compute 
\[\P_{B_4}(q)=q^7+3q^6+8q^5+11q^4+11q^3+5q^2+q. \qed \] 
\end{example}

\begin{table}[h]
\begin{minipage}[t]{.5 \linewidth}
\begin{center}
\begin{tabular}[t]{|l|l|}\hline
$n$& Partitions of $\hr$ using $n$ vectors\\\hline
\hline
7&$\{e_1,\;e_2,\;e_2,\;e_3,\;e_3,\;e_4,\;e_4\}$\\\hline
\multirow{3}{*}{6}&$\{e_1+e_2,\;e_2,\;e_3,\;e_3,\;e_4,\;e_4\}$\\
&$\{e_1,\;e_2,\;e_2+e_3,\;e_3,\;e_4,\;e_4\}$\\
&$\{e_1,\;e_2,\;e_2,\;e_3,\;e_3+e_4,\;e_4\}$\\\hline
\multirow{8}{*}{5}&$\{e_1,\;e_2,\;e_2,\;e_3,\;e_3+2e_4\}$\\
&$\{e_1,\;e_2,\;e_3,\;e_4,\;e_2+e_3+e_4\}$\\
&$\{e_2,\;e_3,\;e_4,\;e_4,\;e_1+e_2+e_3\}$\\
&$\{e_1,\;e_2,\;e_2+e_3,\;e_4,\;e_3+e_4\}$\\
&$\{e_1,\;e_2,\;e_2,\;e_3+e_4,\;e_3+e_4\}$\\
&$\{e_1+e_2,\;e_2,\;e_3,\;e_4,\;e_3+e_4\}$\\
&$\{e_1+e_2,\;e_3,\;e_4,\;e_4,\;e_2+e_3\}$\\
&$\{e_1,\;e_2+e_3,\;e_4,\;e_4,\;e_2+e_3\}$\\\hline
\multirow{7}{*}{4}&$\{e_1,\;e_2,\;e_3,\;e_2+e_3+2e_4\}$\\
&$\{e_1,\;e_2,\;e_2+e_3,\;e_3+2e_4\}$\\
&$\{e_1+e_2,\;e_2,\;e_3,\;e_3+2e_4\}$\\
&$\{e_2,\;e_3,\;e_4,\;e_1+e_2+e_3+e_4\}$\\
&$\{e_1,\;e_2+e_3,\;e_4,\;e_2+e_3+e_4\}$\\
&$\{e_1,\;e_2,\;e_3+e_4,\;e_2+e_3+e_4\}$\\
&$\{e_1+e_2,\;e_3,\;e_4,\;e_2+e_3+e_4\}$\\
&$\{e_2+e_3,\;e_4,\;e_4,\;e_1+e_2+e_3\}$\\
\hline
\end{tabular}
\end{center}
\end{minipage}
\begin{minipage}[t]{.5 \linewidth}
\begin{center}
\begin{tabular}[t]{|l|l|}\hline
$n$& Partitions of $\hr$ using $n$ vectors\\\hline\hline

\multirow{3}{*}{4}&$\{e_2,\;e_3+e_4,\;e_4,\;e_1+e_2+e_3\}$\\
&$\{e_2,\;e_1+e_2,\;e_3+e_4,\;e_3+e_4\}$\\
&$\{e_1+e_2,\;e_3+e_4,\;e_2,\;e_3+e_4\}$\\ \hline

\multirow{11}{*}{3}&$\{e_1,\;e_2,\;e_2+2e_3+2e_4\}$\\
&$\{e_2,\;e_3,\;e_1+e_2+e_3+2e_4\}$\\
&$\{e_1,\;e_2+e_3,\;e_2+e_3+4e_4\}$\\
&$\{e_1+e_2,\;e_3,\;e_2+e_3+2e_4\}$\\
&$\{e_1+e_2,\;e_2+e_3,\;e_3+2e_4\}$\\
&$\{e_1+e_2+e_3,\;e_2,\;e_3+2e_4\}$\\
&$\{e_2,\;e_3+e_4,\;e_1+e_2+e_3+e_4\}$\\
&$\{e_2+e_3,\;e_4,\;e_1+e_2+e_3+e_4\}$\\
&$\{e_1,\;e_2+e_3+e_4,\;e_2+e_3+e_4\}$\\
&$\{e_1+e_2+e_3,\;e_4,\;e_2+e_3+e_4\}$\\
&$\{e_1+e_2,\;e_3+e_4,\;e_2+e_3+e_4\}$\\\hline
\multirow{5}{*}{2}&$\{e_1+e_2+e_3+e_4,\;e_2+e_3+e_4\}$\\
&$\{e_1+e_2+e_3,\;e_2+e_3+2e_4\}$\\
&$\{e_1+e_2,\;e_2+2e_3+2e_4\}$\\
&$\{e_2+e_3,\;e_1+e_2+e_3+2e_4\}$\\
&$\{e_2,\;e_1+e_2+2e_3+2e_4\}$\\\hline
1&$\{e_1+2e_2+2e_3+2e_4\}$\\\hline
\end{tabular}
\end{center}
\end{minipage}

\caption{Partitions of $\hr$ in type $B_4$}\label{tab:example}
\end{table}

Let $P$ denote a partition of the vector $e_1+2e_2+\cdots+2e_{r-1}$.  If $P$ does not have a part containing  $2e_{r-1}$ as a summand, then $P$ contains exactly two parts each containing an $e_{r-1}$ summand.
Hence let $\A=e_i+\cdots+e_{r-1}$ and $\B=e_{j}+\cdots+e_{r-1}$ denote these two parts.

For any partition $P$ containing only nonhooked parts there are exactly 
four ways to extend these partitions $P$ of $e_1+2e_2+\cdots+2e_{r-1}$ to a partition of $e_1+2e_2+\cdots+2e_{r-1}+2e_{r}$, where 
the only parts that change are the ones containing $e_{r-1}$:
\begin{enumerate}[leftmargin=1.8cm]
 \item[$\EB(1)$:] replace $\A$ and $\B$ with $\bar{\A}=\A+e_r$ and $\bar{\B}=\B+e_r$ in $P$,
 \item[$\EB(2)$:] replace $\A$ with $\bar{\A}=\A+e_r$ in $P$, introduce the part $e_r$ into $P$, and leave $\B$ 
unchanged,
 \item[$\EB(3)$:] replace $\B$ with $\bar{\B}=\B+e_r$ in $P$, introduce the part $e_r$ to $P$, and leave $\A$ 
unchanged,
 \item[$\EB(4)$:] introduce the part $e_r$ twice in $P$, and leave $\A$ and $\B$ unchanged. 
\end{enumerate}

From the definition of a partition and the extensions above the following is immediate.
\begin{proposition}\label{proposition:overcountB}
Let $P$ be a partition of $e_1+2e_2+\cdots+2e_{r-1}$ that does not contain a hooked vector as a part, and for $1\leq \ell\leq 4$ let $P(\ell)$ be the result of applying extension $\EB(\ell)$ to $P$. Then for $\ell\neq k$, 
$P(\ell)=P(k)$ if and only if $\{\ell,k\}=\{2,3\}$. Furthermore, if $P(\ell)=P(k)$, then $\A=\B$.
\end{proposition}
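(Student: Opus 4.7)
The plan is to reduce the proposition to a straightforward multiset comparison. First I would decompose $P$ as a disjoint union of multisets $P = M \sqcup \{\A, \B\}$, where $M$ collects the parts of $P$ that do not contain $e_{r-1}$ as a summand. Since by hypothesis $P$ contains no hooked parts, the discussion preceding the proposition guarantees that $\A$ and $\B$ are exactly the two parts of $P$ in which $e_{r-1}$ appears, of the nonhooked shape $e_i + \cdots + e_{r-1}$ and $e_j + \cdots + e_{r-1}$ respectively.

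Next, each extension $\EB(\ell)$ only alters the parts containing $e_{r-1}$ (replacing $\A$ or $\B$ by $\bar{\A} = \A + e_r$ or $\bar{\B} = \B + e_r$) or introduces fresh copies of $e_r$, so the submultiset $M$ is preserved across all four extensions. I would then record the four resulting multisets explicitly:
\begin{align*}
P(1) &= M \sqcup \{\bar{\A}, \bar{\B}\}, \\
P(2) &= M \sqcup \{\bar{\A}, \B, e_r\}, \\
P(3) &= M \sqcup \{\A, \bar{\B}, e_r\}, \\
P(4) &= M \sqcup \{\A, \B, e_r, e_r\}.
\end{align*}

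The main observation is that the cardinalities of the appended multisets are $2, 3, 3, 4$ respectively, all distinct except for the middle two. Consequently, any equality $P(\ell) = P(k)$ with $\ell \neq k$ forces $\{\ell, k\} = \{2, 3\}$. It remains to analyze when $P(2) = P(3)$. Each of $\A, \B, \bar{\A}, \bar{\B}$ has $(r-1)$-st coordinate equal to $1$, so none of them equals $e_r$; this lets me cancel one copy of $e_r$ from each side and reduce to the multiset identity $\{\bar{\A}, \B\} = \{\A, \bar{\B}\}$. Since $\bar{\A} - \A = e_r \neq 0$, the matching $\bar{\A} \leftrightarrow \A$ is impossible, leaving $\bar{\A} = \bar{\B}$ and $\B = \A$, which is equivalent to $\A = \B$. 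This simultaneously completes the ``only if'' direction of the first claim and establishes the final sentence, while the converse implication $\A = \B \Rightarrow P(2) = P(3)$ is immediate from the expressions above. I do not anticipate a real obstacle; the argument is essentially bookkeeping, and the only slightly delicate point is the justification for canceling $e_r$, which follows from the coordinate observation sketched above.
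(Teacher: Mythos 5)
Your proof is correct and complete; the paper offers no proof at all, simply declaring the proposition ``immediate'' from the definitions and the extensions, so your multiset bookkeeping (part counts $n,\,n+1,\,n+1,\,n+2$ isolating the pair $\{2,3\}$, then cancelling $e_r$ and ruling out the matching $\bar{\A}\leftrightarrow\A$ to force $\A=\B$) is precisely the verification the authors had in mind. One small remark: read literally, the ``if'' direction of the stated equivalence would assert $P(2)=P(3)$ unconditionally, which is false when $\A\neq\B$; your reading --- that the only possible coincidence among the four extensions is between $\EB(2)$ and $\EB(3)$, and that it occurs exactly when $\A=\B$ --- is clearly the intended content and is what your argument establishes.
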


It is important to note that if we began with two distinct partitions 
$P$ and $P'$ of the vector $e_1+2e_2+\cdots+2e_{r-1}$ which do not have a hooked vector as a part, then by the definition of the 
extensions $\EB(1)$, $\EB(2)$, $\EB(3)$, and $\EB(4)$, the extensions of $P$ and $P'$ do not yield the same final partition as 
these extensions only affect the parts that involve $e_r$.

\begin{definition}
Let $r\geq 2$, the let
\[\P_{B_r}^H(q)=c_0+c_1q+c_2q^2+\cdots+c_kq^k,\]
where $c_i$ is the number of partitions of $e_1+2e_2+\cdots+2e_r$ with $i$ parts where one part is a hooked vector.
Similarly, let
\[\P_{B_r}^{NH}(q)=c_0+c_1q+c_2q^2+\cdots+c_kq^k,\]
where $c_i$ is the number of ways to write $e_1+2e_2+\cdots+2e_r$ as a sum of exactly $i$ parts where no part is a hooked vector.
\end{definition}

\begin{lemma}\label{Lem:HookNoHook}
For $r\geq 3$, 
\[\P_{B_r}(q)=\P_{B_r}^{H}(q)+\P_{B_r}^{NH}(q).\]
\end{lemma}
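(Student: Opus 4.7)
The plan is to show that any partition of the highest root $\hr$ in type $B_r$ contains at most one hooked part, so the set of all such partitions splits as a disjoint union of those containing exactly one hooked vector and those containing none.

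First, I would translate everything to the vector setting via Lemma~\ref{lem:typeB}: $\P_{B_r}(q)$ encodes the number of ways to write $\Psi(\hr) = e_1 + 2e_2 + \cdots + 2e_r$ as a nonnegative integer sum of parts drawn from the three lists (single $e_i$'s, consecutive sums $e_i + \cdots + e_j$, and hooked vectors $e_i + \cdots + e_{j-1} + 2e_j + \cdots + 2e_r$). The crucial observation is that every hooked vector in the third list has coefficient exactly $2$ on $e_r$, since the doubled block always runs all the way through index $r$.

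Next, I would note that the target vector $\Psi(\hr)$ itself has coefficient exactly $2$ on $e_r$. Since all nonhooked parts contribute $0$ or more to this coefficient (actually $0$ unless the part is $e_r$, but that is not needed) and each hooked part contributes $2$, at most one hooked part can appear in any valid partition. Consequently, the set of partitions of $\hr$ decomposes as the disjoint union of those containing exactly one hooked vector and those containing no hooked vector. Summing the corresponding generating polynomials degree-by-degree in the number of parts yields
\[
\P_{B_r}(q) = \P_{B_r}^{H}(q) + \P_{B_r}^{NH}(q),
\]
by the definitions of $\P_{B_r}^{H}(q)$ and $\P_{B_r}^{NH}(q)$.

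There is no real obstacle here; the entire argument reduces to a pigeonhole on the $e_r$-coefficient. The only subtlety worth double-checking is the wording in the definition of $\P_{B_r}^{H}(q)$ (``one part is a hooked vector''): the $e_r$-coefficient argument ensures that ``one'' and ``at least one'' coincide, so no overcounting occurs and the two pieces are genuinely disjoint.
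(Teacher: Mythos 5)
Your proof is correct and takes essentially the same route as the paper: the paper's one-line proof simply cites Lemma~\ref{lem:typeB} together with the fact that any partition of $e_1+2e_2+\cdots+2e_r$ has at most one hooked part, which is precisely what you establish via the $e_r$-coefficient pigeonhole. (One immaterial slip in your aside: a nonhooked part $e_i+\cdots+e_r$ with $i<r$ also contributes $1$ to the $e_r$-coefficient, not only the part $e_r$ itself; but, as you note, only nonnegativity of the nonhooked contributions is needed.)
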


\begin{proof}
This follows directly from Lemma~\ref{lem:typeB} and the fact that any partition of 
$e_1+2e_2+\cdots+2e_r$ has at most one part that is a hooked vector.
\end{proof}

\begin{proposition}\label{Prop:BNonHook}
For $r\geq 3$, the polynomials $\{\P_{B_i}^{NH}(q)\}$ satisfy the following recursion
\[\P_{B_r}^{NH} (q) = (1+q)^2 \P_{B_{r-1}}^{NH} (q) - q^3 \left( \sum_{i=1}^{r-2} \P_{B_i}^{NH}(q) \right ).\]
\end{proposition}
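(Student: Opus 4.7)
The plan is to count nonhooked partitions of the target vector $e_1+2e_2+\cdots+2e_r$ by lifting them from nonhooked partitions of $e_1+2e_2+\cdots+2e_{r-1}$ via the four extensions $\EB(1),\EB(2),\EB(3),\EB(4)$ defined immediately before Proposition~\ref{proposition:overcountB}. First I would verify that every nonhooked partition $Q$ of $e_1+2e_2+\cdots+2e_r$ arises by applying exactly one of these four extensions to some nonhooked partition $P$ of $e_1+2e_2+\cdots+2e_{r-1}$: since the coefficient of $e_r$ in the target is $2$, the partition $Q$ has exactly two parts containing an $e_r$ summand, and removing $e_r$ from each of these parts (and deleting any part that was just the singleton $e_r$) produces a nonhooked partition $P$ of $e_1+2e_2+\cdots+2e_{r-1}$. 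Which of $\EB(1),\ldots,\EB(4)$ recovers $Q$ from $P$ is then determined by how many of the two $e_r$-containing parts of $Q$ were singletons $e_r$.

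Tracking parts, $\EB(1)$ preserves the number of parts, $\EB(2)$ and $\EB(3)$ each add one, and $\EB(4)$ adds two. Hence each fixed underlying $P$ with $|P|$ parts contributes a total $q$-weight of $q^{|P|}+2q^{|P|+1}+q^{|P|+2}=q^{|P|}(1+q)^2$ across the four extensions, yielding the main term $(1+q)^2\P_{B_{r-1}}^{NH}(q)$. By Proposition~\ref{proposition:overcountB}, however, $\EB(2)$ and $\EB(3)$ produce the same partition precisely when $\A=\B$; each such collision overcounts the corresponding partition by a single weight of $q^{|P|+1}$, so I would subtract $q\sum_{P:\,\A=\B} q^{|P|}$ from the main term.

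It remains to compute this overcount. If $\A=\B=e_j+e_{j+1}+\cdots+e_{r-1}$, then the remaining parts of $P$ must partition $e_1+2e_2+\cdots+2e_{r-1}-2(e_j+\cdots+e_{r-1})$. This vector has nonnegative coordinates only when $j\geq 2$, in which case it equals $e_1+2e_2+\cdots+2e_{j-1}$, the image under $\Psi$ of the highest root of $B_{j-1}$. Because this remaining vector is supported only on coordinates $1$ through $j-1$, no hooked part of $B_{r-1}$ can occur in the remainder, so that remainder is genuinely a nonhooked partition of the $B_{j-1}$ highest root, counted by $\P_{B_{j-1}}^{NH}(q)$; together with the factor $q^2$ from the two copies of $\A=\B$ this gives $q^2\P_{B_{j-1}}^{NH}(q)$ for each $j$. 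Summing over $j=2,\ldots,r-1$ and reindexing $i=j-1$ converts the overcount into $q^3\sum_{i=1}^{r-2}\P_{B_i}^{NH}(q)$, matching the desired recursion.

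The main obstacle I expect is the careful justification of the bijective framework, particularly verifying that the inverse procedure $Q\mapsto P$ never produces a hooked part of $B_{r-1}$ and that the partition of $Q$'s into the four extension types is exhaustive and disjoint (modulo the collision identified in Proposition~\ref{proposition:overcountB}). Both claims reduce to the explicit shape of nonhooked and hooked parts listed in Lemma~\ref{lem:typeB}, but they demand a careful case analysis on whether each of the two $e_r$-containing parts of $Q$ is a singleton, rather than a formal calculation.
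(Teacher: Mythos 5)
Your proposal is correct and follows essentially the same route as the paper: lift each nonhooked partition of $e_1+2e_2+\cdots+2e_r$ from one of $e_1+2e_2+\cdots+2e_{r-1}$ via the four extensions, weight them by $(1+q)^2$, and subtract the $\EB(2)=\EB(3)$ collisions, which contribute $q^3\sum_{i=1}^{r-2}\P_{B_i}^{NH}(q)$ after stripping the two equal parts $\A=\B$. Your extra care in checking that the inverse map $Q\mapsto P$ stays nonhooked and that the remainder after removing $\A=\B$ is a genuine $B_{j-1}$ nonhooked partition only makes explicit what the paper leaves implicit.
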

\begin{proof}
Recall that every partition $P$ of $e_1+2e_2+\cdots+2e_r$  with no hooked 
parts comes from an extension of a partition $P'$ of $e_1+2e_2+\cdots+2e_{r-1}$ with no hooked parts via at least one of the four extensions  
$\EB(1),$ $\EB(2),$ $\EB(3),$ and $\EB(4)$. These extensions respectively add zero, one, one, and two summands.

Hence, the polynomial whose coefficients encode the total of number of said extensions~is \[ (1+2q+q^2)\P_{B_{r-1}}^{NH} (q)=(1+q)^2 \P_{B_{r-1}}^{NH} (q) .  \] 
However, by Proposition \ref{proposition:overcountB},  $ (1+q)^2  \P_{B_{r-1}}^{NH} (q) $ double 
counts the contribution of partitions $P$ of $e_1+2e_2+\cdots+2e_r$ obtained from extensions $\EB(2)$ and $\EB(3)$ of partitions 
$P'$ of $e_1+2e_2+\cdots+2e_{r-1}$ that contain two equal parts $\A$ and $\B$ with \[ \A=\B=e_{i+1} + \cdots + e_{r-1} \] for 
some $i$ with $1\leq i \leq r-2$. Consider these partitions for a fixed $i$.  By removing $\A$ and $\B$ from such a partition $P'$, we see that these partitions are in bijection with the set of partitions of $e_1+2e_2+\cdots+2e_i$.  Thus, the polynomial whose coefficients encode the total number of such partitions is given by $q^3\P_{B_i}^{NH}(q)$, where we multiply 
 by $q^3$ because $\bar{P}(2)$ and $\bar{P}(3)$ each have three more parts than $P'$. 
By ranging over all possible $i$ we have that the polynomial encoding the double counted partitions is 
 \[ q^3 \left( \sum_{i=1}^{r-2} \P_{B_i}^{NH}(q) \right ) .\] 
 Thus \[\P_{B_r}^{NH} (q) = (1+q)^2 \P_{B_{r-1}}^{NH} (q) - q^3 \left( \sum_{i=1}^{r-2} \P_{B_i}^{NH}(q) \right ). \qedhere \]
\end{proof}

\begin{proposition}\label{Prop:BNonHookGenFun}
The closed form for the generating function $\textstyle\sum_{r \geq 1} \P_{B_r}^{NH}(q)x^r$ is
\[
\frac{qx+(-2q-q^2)x^2+(q+q^2)x^3}{1-(2+2q+q^2)x+(1+2q+q^2+q^3)x^2}.
\]
\end{proposition}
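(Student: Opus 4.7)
The plan is to translate the recursion in Proposition~\ref{Prop:BNonHook} into a functional equation for the generating function $F(x) := \sum_{r \geq 1} \P_{B_r}^{NH}(q)x^r$ and then solve algebraically for $F(x)$.

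First I would nail down the two base cases needed, since the recursion only kicks in at $r \geq 3$. The only partition of $e_1$ into nonhooked parts is $\{e_1\}$, so $\P_{B_1}^{NH}(q) = q$; the partitions of $e_1 + 2e_2$ into nonhooked parts are $\{e_1+e_2,\, e_2\}$ and $\{e_1,\, e_2,\, e_2\}$, giving $\P_{B_2}^{NH}(q) = q^2 + q^3$.

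Next I would multiply the recursion
\[
\P_{B_r}^{NH}(q) = (1+q)^2 \P_{B_{r-1}}^{NH}(q) - q^3 \sum_{i=1}^{r-2} \P_{B_i}^{NH}(q)
\]
by $x^r$ and sum over $r \geq 3$. The left-hand side contributes $F(x) - qx - (q^2+q^3)x^2$. The shifted single-sum term contributes $(1+q)^2 x\bigl(F(x) - qx\bigr)$. For the double sum, I swap the order of summation: for each fixed $i \geq 1$, the inner index $r$ ranges over $r \geq i+2$, so the contribution is $\P_{B_i}^{NH}(q)\cdot x^{i+2}/(1-x)$, and summing over $i$ gives $x^2 F(x)/(1-x)$. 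Thus the functional equation is
\[
F(x) - qx - (q^2+q^3)x^2 \;=\; (1+q)^2 x\bigl(F(x) - qx\bigr) \;-\; \frac{q^3 x^2}{1-x}\, F(x).
\]

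The final step is routine algebra: multiply through by $(1-x)$, collect the $F(x)$ terms on one side, and simplify. The coefficient of $F(x)$ becomes
\[
(1-x)\bigl(1 - (1+2q+q^2)x\bigr) + q^3 x^2 \;=\; 1 - (2+2q+q^2)x + (1+2q+q^2+q^3)x^2,
\]
matching the proposed denominator, and the remaining side simplifies to $(1-x)\bigl(qx - (q+q^2)x^2\bigr) = qx - (2q+q^2)x^2 + (q+q^2)x^3$, matching the proposed numerator. Dividing yields the claimed closed form.

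There is no real obstacle here beyond careful bookkeeping; the only subtlety is the interchange of summation in the double sum and the need to keep the two base cases straight when stripping them from $F(x)$. As a sanity check, extracting coefficients of $x^1$ and $x^2$ from the purported formula should recover $q$ and $q^2+q^3$, which confirms that the initial conditions are consistent with the recursion.
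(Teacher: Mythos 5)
Your proposal is correct and follows essentially the same route as the paper: both establish the base cases $\P_{B_1}^{NH}(q)=q$ and $\P_{B_2}^{NH}(q)=q^2+q^3$, substitute the recursion from Proposition~\ref{Prop:BNonHook} into the generating function for $r\ge 3$, handle the double sum via the geometric factor $x^2/(1-x)$, and solve the resulting functional equation algebraically. The bookkeeping you describe matches the paper's computation step for step.
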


\begin{proof} 
For simplicity, let $S(q,x)=\sum_{r \geq 1} \P_{B_r}(q)x^r$.  Then 
\begin{align*}
S(q,x) &= \P_{B_1}^{NH}(q)x + \P_{B_2}^{NH}(q)x^2 + \sum_{r \geq 3} \P_{B_r}^{NH}(q)x^r \\
&= qx + (q^3+q^2)x^2 + \sum_{r \geq 3} \left[ (1+q)^2 \P_{B_{r-1}}^{NH} (q) - q^3 \left( \sum_{i=1}^{r-2} \P_{B_i}^{NH}(q) \right ) \right] x^r \\
&= qx + (q^3+q^2)x^2 + x (1+q)^2 \left( \sum_{r \geq 3} \P_{B_{r-1}}^{NH}(q)x^{r-1} \right) - q^3x^2 \sum_{r \geq 3} \left( \sum_{i=1}^{r-2} \P_{B_i}^{NH}(q) \right) x^{r-2} \\
&= qx + (q^3+q^2)x^2 + x(1+q)^2 \left( S(q,x) - qx \right) - \frac{q^3x^2}{1-x}S(q,x).
\end{align*}
The second equality given above comes from Proposition~\ref{Prop:BNonHook}. Hence,
\[
S(q,x) \cdot \left( 1 - x(1+q)^2 + \dfrac{q^3x^2}{1-x} \right) = qx + (q^3+q^2)x^2 - qx^2(1+q)^2,
\]
so
\[
S(q,x) = \frac{qx+(-2q-q^2)x^2+(q+q^2)x^3}{1-(2+2q+q^2)x+(1+2q+q^2+q^3)x^2}. \qedhere
\]
\end{proof}

\begin{proposition}\label{Prop:BHook}
For $r \geq 3$, the polynomials $\{\P_{B_i}^H(q)\},\{\P_{B_i}^{NH}(q)\}$ satisfy the following recurrence
\[
\P_{B_r}^{H} (q) = q + 2 \left( \sum_{i=2}^{r-1} \P_{B_i}^{NH} (q) \right) - q^2 \left(  \sum_{i=1}^{r-2} \sum_{j=1}^i \P_{B_j}^{NH} (q) \right) .
\]
\end{proposition}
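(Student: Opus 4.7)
The plan is to group hook partitions of $\hr_{B_r}$ by the ``bend index'' $b$ of their unique hook $H(a,b)=e_a+\cdots+e_{b-1}+2e_b+\cdots+2e_r$. Uniqueness is immediate: every hook contributes $2$ to the coefficient of $e_r$, while a nonhooked part contributes at most $1$, so two hooks would already overshoot the target coefficient $2$. The bend $b$ ranges over $\{2,3,\ldots,r\}$. When $b=2$ the only possibility is $H(1,2)=\hr$ itself, yielding the single partition $\{\hr\}$, which accounts for the leading $q$ on the right-hand side.

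For $b\geq 3$, I will set $i=b-1\geq 2$ and build a correspondence between hook partitions of $\hr_{B_r}$ with bend $b$ and pairs $(P,\A)$, where $P$ is a nonhooked partition of $\hr_{B_i}$ and $\A$ is a marked part of $P$ ending at $e_i$. Given a hook partition $P'$ with hook $H(a,b)$, define $\A=e_a+\cdots+e_{b-1}$ and $P=(P'\setminus\{H(a,b)\})\cup\{\A\}$; a direct computation shows $\A+(\hr_{B_r}-H(a,b))=\hr_{B_{b-1}}$ in both the $a\geq 2$ and $a=1$ cases, so $P$ is a valid nonhooked partition. Conversely, any such $(P,\A)$ with $\A=e_a+\cdots+e_i$ produces the hook partition $(P\setminus\{\A\})\cup\{\A+2e_{i+1}+\cdots+2e_r\}$.

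Since the coefficient of $e_i$ in $\hr_{B_i}$ is $2$ and nonhooked parts contribute $0$ or $1$ per coordinate, \emph{exactly two} parts of $P$ end at $e_i$. When these two parts are distinct, the two choices of $\A$ yield two different hook partitions; when they coincide, both choices collapse to the same hook partition, an overcount of exactly one per such $P$. I will characterize the ``coincident'' partitions: if both parts equal $\A=e_{a'}+\cdots+e_i$, then $a'=1$ is ruled out (it forces a negative $e_1$ coordinate) and for $a'\geq 2$ the remaining parts form a nonhooked partition of $\hr_{B_{a'-1}}$. Summing over $a'\in\{2,\ldots,i\}$ and accounting for the two repeated parts (the $q^2$ factor) gives the overcount polynomial $q^2\sum_{j=1}^{i-1}\P_{B_j}^{NH}(q)$.

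Putting it together, hook partitions with bend $b=i+1$ are counted by $2\P_{B_i}^{NH}(q)-q^2\sum_{j=1}^{i-1}\P_{B_j}^{NH}(q)$; summing over $i=2,\ldots,r-1$ and reindexing the double sum via $i'=i-1$ yields the proposition. The main technical obstacle is verifying the correspondence uniformly across $a=1$ and $a\geq 2$ (in the former the hook removal eliminates the $e_1$ coordinate entirely, in the latter it preserves the $B$-type shape of the remainder up to index $a-1$); the key identity $\A+(\hr_{B_r}-H(a,b))=\hr_{B_{b-1}}$ resolves both cases at once. A secondary care point is correctly tracking the number of parts through the bijection so that the polynomial identity, and not merely a cardinality statement, is what emerges.
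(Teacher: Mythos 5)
Your proposal is correct and is essentially the paper's own argument read in reverse: the paper extends each nonhooked partition of $e_1+2e_2+\cdots+2e_i$ by hooking one of its two parts ending at $e_i$ (giving $2\sum_{i=2}^{r-1}\P_{B_i}^{NH}(q)$), isolates $\{\hroot\}$ as the extra $q$, and subtracts the same $\A=\B$ overcount $q^2\sum_{j}\P_{B_j}^{NH}(q)$ that you identify. Your grouping by the bend index and the identity $\A+(\hroot_{B_r}-H(a,b))=\hroot_{B_{b-1}}$ is just the inverse of that extension, so the two proofs coincide.
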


\begin{proof}
Let $i \in \{2,3\ldots,r-1\}$.  Let $P$ be a partition of $e_1+2e_2+\cdots+2e_i$ that does not have any hooked part.  
Then $P$ has two parts that contain $e_i$ as a summand.  Let these two parts be denoted $\A=e_j+\cdots+e_i$ and 
$\B=e_k+\cdots+e_i$.  We can extend $P$ to a partition of $B_r$ that has a hooked part: either replace $\A$ by 
$e_j+\cdots+e_i+2e_{i+1} + \cdots + 2e_{r}$ or replace $\B$ with $e_k+\cdots+e_i+2e_{i+1} + \cdots + 2e_{r}$.  Ignoring the full 
partition $e_1+2e_2+\cdots+2e_r$, every partition of $e_1+2e_2+\cdots+2e_r$ that has a hooked part can be constructed by extending 
a partition $P$ of $e_1+2e_2+\cdots+2e_i$ for some $i$ using the aforementioned process.  Indeed, suppose $Q$ is a partition of 
$e_1+2e_2+\cdots+2e_r$ that has a hooked part, say $e_j+e_{j+1}+\cdots+e_{i}+2e_{i+1}+\cdots+2e_{r}$.  
Then the partition $P$ that 
has all the same parts as $Q$ but replaces $e_j+e_{j+1}+\cdots+e_{i}+2e_{i+1}+\cdots+2e_{r}$ with $e_j+e_{j+1}+\cdots+e_{i}$ is a 
partition of $e_1+2e_2+\cdots+2e_i$ that extends to $Q$.  Consequently, we have
\[
\P_{B_r}^{H} (q) = q + 2 \left( \sum_{i=2}^{r-1} \P_{B_i}^{NH} (q) \right) - G(q)
\]
where $G(q)$ subtracts the contribution of partitions of  $e_1+2e_2+\cdots+2e_r$ that can arise
from partitions of $e_1+2e_2,\ e_1+2e_2+2e_3,\ldots,e_1+2e_2+\cdots+2e_{r-1}$ (that do not have any hooked part) via the above extension in 
multiple ways. 

To compute $G(q)$, we suppose $P$ and $P'$ are partitions of $e_1+2e_2+\cdots+2e_i$ 
and $e_1+2e_2+\cdots+2e_{i'}$ respectively for which the above extension leads to the same partition $Q$ of 
$e_1+2e_2+\cdots+2e_r$.  Only one part of the partition achieved after extending $P$ (similarly $P'$) to $Q$ contains a hooked 
part, and it is of the form $e_k+\cdots+e_{i}+2e_{i+1}+\cdots+2e_r$ (similarly $e_{k'}+\cdots+e_{i'}+2e_{i'+1}+\cdots+2e_r$).  
Since $P$ and $P'$ both extend to $Q$, this implies 
\[
e_k+\cdots+e_{i}+2e_{i+1}+\cdots+2e_r = e_{k'}+\cdots+e_{i'}+2e_{i'+1}+\cdots+2e_r
\]
and hence $i=i'$ and $e_k+\cdots+e_{i} = e_{k'}+\cdots+e_{i'}$. Since all other parts of $P$ and $P'$ are 
the same as the parts of $Q$ besides the part containing a $2e_r$ summand, we deduce $P=P'$.

Thus, in order to determine $G(q)$, we need to determine when 
applying the two different aforementioned extensions to a partition $P$ (not containing a hooked part)  of $e_1+2e_2+\cdots+2e_i$ for some $i \in \{2,\ldots,r-1\}$ can result in the same partition.  Let $P$ be a partition of $e_1+2e_2+\cdots+2e_i$ not containing a hooked 
part.  Then $P$ has exactly two parts containing $e_i$ as a summand.  Call these $\A=e_k+\cdots+e_i$ and $\B=e_{k'}+\cdots+e_i$.  
When applying our extension, these are replaced by $e_k+\cdots+e_i+2e_{i+1} + \cdots + 2e_{r}$ and $e_{k'}+\cdots+e_i+2e_{i+1} + 
\cdots + 2e_{r}$ respectively, and all other summands remain the same, so the two extensions are equal if and only if $\A=\B$ 
(i.e. $k=k'$).  

We can now compute $G(q)$.  Fix $i \in \{2,\ldots,r-1\}$.  
We determine the contribution of the set of partitions $P$ of $e_1+2e_2+\cdots+2e_i$ (not containing a hooked part) to $G(q)$.  
From the previous paragraph, we obtain a contribution to $G(q)$ for every such partition $P$ in which its two parts $\A,\B$ 
containing $e_i$ are the same, say $\A=\B=e_{j+1}+\cdots+e_i$.  The remainder of the partition $P$ can then range over any 
partition of $e_1+2e_2+\cdots+2e_j$ with $1 \leq j < i$ not containing a hooked part.  For each such partition of 
$e_1+2e_2+\cdots+2e_j$, the partition $P$ (and hence its extension) has two more parts than it, accounting for $\A$ and $\B$.  
Thus, the combined contribution to $G(q)$ arising from partitions of $e_1+2e_2+\cdots+2e_i$ is given by \[\sum_{j=1}^{i-1} q^2 
\P_{B_j}^{NH}(q).\]  Ranging over all $i$ gives us
\[
G(q)=\sum_{i=2}^{r-1} \sum_{j=1}^{i-1} q^2 \P_{B_j}^{NH}(q) = \sum_{i=1}^{r-2} \sum_{j=1}^{i} q^2 \P_{B_j}^{NH}(q) . \qedhere
\]
\end{proof}

\begin{theorem}\label{Bgenfun} The closed formula for the generating function $\textstyle\sum_{r\geq 1}\P_{B_{r}}(q)x^r$ is given by
\[\displaystyle\sum_{r\geq 1}\P_{B_r}(q)x^r=\frac{qx+(-q-q^2)x^2+q^2x^3}{1-(2+2q+q^2)x+(1+2q+q^2+q^3)x^2}          .\]
\end{theorem}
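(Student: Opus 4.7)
The plan is to combine Lemma~\ref{Lem:HookNoHook} with the closed form for $\sum_{r\geq 1}\P_{B_r}^{NH}(q)x^r$ from Proposition~\ref{Prop:BNonHookGenFun} and a generating function derived from the hooked recurrence in Proposition~\ref{Prop:BHook}. Let $T(q,x)=\sum_{r\geq 1}\P_{B_r}(q)x^r$, $S(q,x)=\sum_{r\geq 1}\P_{B_r}^{NH}(q)x^r$, and $H(q,x)=\sum_{r\geq 1}\P_{B_r}^{H}(q)x^r$. First I would verify that the decomposition $\P_{B_r}(q)=\P_{B_r}^{H}(q)+\P_{B_r}^{NH}(q)$ also holds at the boundary cases $r=1,2$ (noting $\P_{B_1}^{H}(q)=0$, $\P_{B_2}^{H}(q)=q$), so that $T(q,x)=S(q,x)+H(q,x)$ identically.

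Next, I would compute $H(q,x)$ directly from Proposition~\ref{Prop:BHook}. Multiplying the recurrence by $x^r$ and summing over $r\geq 3$, the three terms on the right are
\[
\sum_{r\geq 3}qx^r=\frac{qx^3}{1-x},\qquad 2\sum_{r\geq 3}\sum_{i=2}^{r-1}\P_{B_i}^{NH}(q)x^r=\frac{2x}{1-x}\bigl(S(q,x)-qx\bigr),
\]
and, after swapping the order of summation,
\[
q^2\sum_{r\geq 3}\sum_{i=1}^{r-2}\sum_{j=1}^{i}\P_{B_j}^{NH}(q)x^r=\frac{q^2x^2}{(1-x)^2}\,S(q,x).
\]
Adding back $\P_{B_2}^{H}(q)x^2=qx^2$ yields a closed form for $H(q,x)$ as a rational function in $x,q$ whose denominator is $(1-x)^2\,D(x)$, where $D(x)=1-(2+2q+q^2)x+(1+2q+q^2+q^3)x^2$ is the denominator appearing in $S(q,x)$.

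Finally, I would add $S(q,x)$ and $H(q,x)$, clear common factors, and simplify. The key simplification is that the factor $(1-x)^2$ coming from the hooked contribution should cancel against factors in the numerator, leaving the same denominator $D(x)$ as in $S(q,x)$. Matching the result against the claimed numerator $qx+(-q-q^2)x^2+q^2x^3$ reduces to a polynomial identity in $x$ and $q$. The main obstacle is purely bookkeeping: carrying out the telescoping of the $(1-x)$ factors correctly and verifying that all lower-order corrections (arising from starting the recurrences at $r=3$ and from the seed values $\P_{B_1}^{NH}(q)=q$, $\P_{B_2}^{NH}(q)=q^2+q^3$, $\P_{B_2}^{H}(q)=q$) combine to give exactly the claimed numerator; this is routine but sensitive to off-by-one errors in the index shifts.
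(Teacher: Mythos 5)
Your proposal is correct and follows essentially the same route as the paper's own proof: the paper likewise writes $\textstyle\sum_{r\ge 1}\P_{B_r}(q)x^r$ as the sum of the nonhooked series from Proposition~\ref{Prop:BNonHookGenFun} and the hooked series obtained by summing the recurrence of Proposition~\ref{Prop:BHook} over $r\ge 3$, with exactly the three contributions $\frac{qx^3}{1-x}$, $\frac{2x}{1-x}(S(q,x)-qx)$, and $\frac{q^2x^2}{(1-x)^2}S(q,x)$ plus the seed term $qx^2$. Your boundary checks and index-shift bookkeeping match the paper's computation, so there is nothing further to add.
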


\begin{proof} 
For simplicity, let $T(q,x)=\textstyle\sum_{r \geq 1} \P_{B_r}^{H}(q)x^r$, and $S(q,x)=\textstyle\sum_{r \geq 1} \P_{B_r}^{NH}(q)x^r$.  Then 
\begin{align*}
T(q,x) &= \P_{B_1}^{H}(q)x + \P_{B_2}^{H}(q)x^2 + \sum_{r \geq 3} \P_{B_r}^{H}(q)x^r \\
&= qx^2 + \sum_{r \geq 3} \left( q + 2 \left( \sum_{i=2}^{r-1} \P_{B_i}^{NH} (q) \right) - q^2 \left(  \sum_{i=1}^{r-2} \sum_{j=1}^i \P_{B_j}^{NH} (q) \right) \right) x^r \\
&= qx^2 + \sum_{r \geq 3} qx^r + \sum_{r \geq 3}  2 \left( \sum_{i=2}^{r-1} \P_{B_i}^{NH} (q) \right)x^r - q^2 \sum_{r \geq 3} \left(  \sum_{i=1}^{r-2} \sum_{j=1}^i \P_{B_j}^{NH} (q) \right)x^r \\
&= qx^2 + \dfrac{qx^3}{1-x} + \dfrac{2x}{1-x} (S(q,x)-qx) - \dfrac{q^2x^2}{(1-x)^2} S(q,x).
\end{align*}
The second equality follows from Proposition~\ref{Prop:BHook}.    Hence,
\begin{align*}
\sum_{r \geq 1} \P_{B_r}(q)x^r &= S(q,x)+T(q,x) \\&= S(q,x) + qx^2 + \dfrac{qx^3}{1-x} - \dfrac{2qx^2}{1-x}+ \dfrac{2x}{1-x} S(q,x) - \dfrac{q^2x^2}{(1-x)^2} S(q,x),
\end{align*}
the first equality following from Lemma~\ref{Lem:HookNoHook}.  It follows then that
\[
\sum_{r \geq 1} \P_{B_r}(q)x^r = \frac{qx+(-q-q^2)x^2+q^2x^3}{1-(2+2q+q^2)x+(1+2q+q^2+q^3)x^2}. \qedhere
\]
\end{proof}

\section{Type $C$} \label{type C}

Let $\P_{C_r}(q):=\wp_q(\hr)$, where $\hr=2\alpha_1+2\alpha_2+\cdots+2\alpha_{r-1}+\alpha_r$ is the highest root of the Lie algebra of type $C_r$.  By Lemma~\ref{lem:TypeC}, we can recover $\P_{C_r}(q)$ by determining the number of partitions of $e_1+2e_2+\cdots+2e_{r}$ into $k$ parts where the parts are as specified in Lemma~\ref{lem:TypeC}.  In this light, throughout this section, a \emph{partition} of the vector $e_1+2e_2+\cdots+2e_{\ell}$ in $\CC^{\infty}$ where $2 \leq \ell \leq r$ is a nonnegative integer combination of $k$ (not necessarily distinct) vectors from the following~set:
 \begin{itemize}
\item Nonhooked vectors (parts) of the form $e_i +e_{i+1}+\cdots + e_j$ with $1 \leq i \leq j \leq \ell$. 
\item Hooked vectors (parts) of the form $e_1 + 2e_{2} + \cdots + 2e_{i} +e_{i+1} + \cdots + e_j$ with $1 < i < j \leq \ell$.  
\end{itemize}

Let $P$ be a partition of $\Psi(\hr) = e_1+2e_2+2e_3+\cdots+2e_{r-1} \in \CC^\infty$.
If $P$ has more than one part, then exactly two of the parts of $P$ must contain $e_{r-1}$ as a summand. 
Let $\A = e_i + \cdots +e_{r-1} $ and $\B = e_{j} + \cdots + e_{r-1}$ be these two parts.  For any partition $P$ besides the 
partition $\{e_1+2e_2+\cdots+2e_{r-1}\}$ there are exactly four ways to extend a partition $P$ of $e_1+2e_2+\cdots+2e_{r-1}$ to a 
partition of $e_1+2e_2+\cdots+2e_{r-1}+2e_r$, where the parts that do not contain $e_{r-1}$ remain the same:
\begin{enumerate}[leftmargin=1.8cm]
 \item [$\E(1)$:] introduce two $e_r$ parts to $P$ to get $P \cup \{ e_r,e_r \}$,
 \item [$\E(2)$:] replace $\A$ by $\bar{\A} = e_i + \cdots +e_{r-1}+ e_r$, introduce the part $e_r$ to $P$, and leave $\B$ 
unchanged,
 \item [$\E(3)$:] replace $\B$ by $\bar{\B}  = e_{j} + \cdots + e_{r-1}+ e_r$, introduce the part  $e_r$ to $P$, and leave $\A$ 
unchanged,
 \item [$\E(4)$:] replace both $\A$ and $\B$ by $\bar{\A} = e_i + \cdots +e_{r-1}+ e_r$ and  $\bar{\B}  = e_{j} + \cdots + 
e_{r-1}+ e_r$ respectively.
\end{enumerate}
We remark that the only remaining partitions $P$ of $e_1+2e_2+2e_3+\cdots+2e_r$ in $\mathbb{C}^{\infty}$ which are not formed 
from the above extensions are the following three partitions:
\begin{align}
P_1&=\{e_1+2e_2+\cdots+2e_r\}\label{partition1}\\
P_2&=\{e_1+2e_2+\cdots+2e_{r-1}+e_r,e_r\}\label{partition2}\\
P_3&= \{e_1+2e_2+\cdots+2e_{r-1},e_r,e_r\}\label{partition3}
\end{align}
which contain one, two, and three parts respectively. 
From the definition of a partition and the extensions above the following is immediate.
\begin{proposition}\label{proposition:overcount}
Let $P$ be a partition of $\Psi(\hr) = e_1+2e_2+\cdots+2e_{r-1} \in \CC^\infty$ distinct from 
the partition with only one part $\{e_1+2e_2+\cdots+2e_{r-1}\}$ and for $1\leq \ell\leq 4$ let $P(\ell)$ be the result of applying 
extension $\E(\ell)$ to $P$. Then for $\ell\neq k$, $P(\ell)=P(k)$ if and only if $\{\ell,k\}=\{2,3\}$. Furthermore, if 
$P(\ell)=P(k)$, then $\A=\B$.
\end{proposition}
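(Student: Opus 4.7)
The plan is to prove both claims by attaching to each extended partition a small numerical signature that distinguishes the outcomes of the four extensions in all but one case. Since $P$ is assumed to be distinct from the one-part partition, $P$ has at least two parts; and because the coefficient of $e_{r-1}$ in $\Psi(\hr)=e_1+2e_2+\cdots+2e_{r-1}$ is $2$, exactly two parts of $P$ (counted with multiplicity) contain an $e_{r-1}$ summand, namely $\A$ and $\B$. Every other part of $P$ avoids both $e_{r-1}$ and $e_r$, and each extension $\E(\ell)$ leaves those other parts untouched. Hence $P(\ell)=P(k)$ reduces to equality of the sub-multisets produced by the two extensions.

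For each $\ell\in\{1,2,3,4\}$ I would attach to $P(\ell)$ the triple $(a_\ell,b_\ell,c_\ell)$, where $a_\ell$ counts parts of $P(\ell)$ that contain $e_{r-1}$ but not $e_r$, $b_\ell$ counts parts that contain both $e_{r-1}$ and $e_r$, and $c_\ell$ counts parts equal to the singleton $e_r$ (noting that $P$ itself contributes nothing to $c$, since $P$ has no $e_r$-component). A direct inspection of the four recipes gives
\[
(a_1,b_1,c_1)=(2,0,2),\quad (a_2,b_2,c_2)=(a_3,b_3,c_3)=(1,1,1),\quad (a_4,b_4,c_4)=(0,2,0).
\]
These triples are pairwise distinct across the three groups $\{1\}$, $\{2,3\}$, $\{4\}$, so $P(\ell)=P(k)$ forces $\{\ell,k\}\subseteq\{2,3\}$, ruling out every pairing other than the asserted one.

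It remains to compare $P(2)$ with $P(3)$. Both partitions agree with $P$ outside three new parts, and both contribute one common singleton $e_r$; cancelling this singleton reduces the equality $P(2)=P(3)$ to the multiset identity $\{\bar{\A},\B\}=\{\A,\bar{\B}\}$. Because $\bar{\A}=\A+e_r$ has a nonzero $e_r$-coordinate whereas $\A$ does not, and likewise for $\B$, the only viable matching is $\bar{\A}=\bar{\B}$ together with $\A=\B$, both of which are equivalent to $\A=\B$. The converse is immediate: if $\A=\B$, the two modifications produce the same multiset. This simultaneously yields the ``if and only if'' and the final sentence of the proposition. The only subtlety, and thus the main obstacle, is multiset bookkeeping when $\A=\B$ (so $P$ contains the same vector with multiplicity two); once this is made explicit, the proof reduces to a clean case check on the four extensions.
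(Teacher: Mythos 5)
Your proof is correct. The paper offers no argument for this proposition (it is declared ``immediate'' from the definitions of the extensions), and your signature triple $(a_\ell,b_\ell,c_\ell)$ together with the multiset cancellation reducing $P(2)=P(3)$ to $\{\bar{\A},\B\}=\{\A,\bar{\B}\}$ is a clean and complete verification of exactly the case check the authors are implicitly relying on, including the multiplicity-two bookkeeping when $\A=\B$.
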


Note that the extensions $\E(1)$, $\E(2)$, $E_C(3)$, and $\E(4)$ only affect parts that contain $e_{r-1}$ as a summand, hence if you start with two distinct partitions $P$ and $P'$ of $e_1+2e_2+2e_3+\cdots+2e_{r-1}$ in $\mathbb{C}^{\infty}$ then the extensions $P(\ell)\neq P'(j)$ for any $1\leq \ell,j\leq 4$.

\begin{theorem}\label{Cthm}
If $r\geq 3$, then the polynomials $\{\P_{C_i}(q)\}$ satisfy the following recurrence
\[\P_{C_r} (q) = (1+q)^2( \P_{C_{r-1}} (q)-q) - q^3 \left( \sum_{i=1}^{r-2} \P_{C_i}(q) \right )+(q+q^2+q^3) .\] 
\end{theorem}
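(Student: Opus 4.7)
The plan is to adapt the strategy of Proposition~\ref{Prop:BNonHook}, writing each partition of $e_1+2e_2+\cdots+2e_r$ either as the extension of a multi-part partition of $e_1+2e_2+\cdots+2e_{r-1}$ via one of $\E(1)$--$\E(4)$, or as one of the three exceptional partitions $P_1$, $P_2$, $P_3$ from \eqref{partition1}--\eqref{partition3} that cannot be so obtained. In contrast to the type $B$ situation, the four extensions act uniformly on partitions whether or not they contain a hooked part, so we do not need to separate $\P_{C_r}(q)$ into hooked and nonhooked pieces.

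For the extension count, every partition $P$ of $e_1+2e_2+\cdots+2e_{r-1}$ with at least two parts has exactly two parts $\A, \B$ containing $e_{r-1}$, and the four extensions $\E(1), \E(2), \E(3), \E(4)$ add $2,1,1,0$ new parts respectively. Since distinct sources yield distinct extended partitions, the naive generating contribution is $(1+q)^2(\P_{C_{r-1}}(q) - q)$, where the $-q$ removes the single-part source partition $\{e_1+2e_2+\cdots+2e_{r-1}\}$ to which the extensions do not apply. By Proposition~\ref{proposition:overcount}, the only duplication is between $\E(2)$ and $\E(3)$, occurring precisely when $\A = \B$. Since any partition of $e_1+2e_2+\cdots+2e_{r-1}$ has at most one hooked part (the $e_1$-coefficient of the target equals $1$), when $\A=\B$ both parts must be nonhooked, i.e., $\A = \B = e_{i+1}+\cdots+e_{r-1}$ for some $i \in \{1,\ldots,r-2\}$. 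Removing $\{\A, \B\}$ from $P$ then leaves a partition of $e_1+2e_2+\cdots+2e_i$ whose parts are exactly those allowed by Lemma~\ref{lem:TypeC} in the $C_i$ setting. The sources with $\A=\B$ are thus enumerated by $q^2\sum_{i=1}^{r-2}\P_{C_i}(q)$, and each spawns one spurious duplicate of weight $q$ (since the coinciding $\E(2)=\E(3)$ extension adds one additional part), giving the total overcount $q^3\sum_{i=1}^{r-2}\P_{C_i}(q)$.

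Finally, a short case analysis on the $e_r$-containing parts of a target partition $Q$ of $e_1+2e_2+\cdots+2e_r$ shows that the only $Q$'s not arising from any extension are exactly $P_1$, $P_2$, $P_3$, each of which would require a source partition that is itself single-part (hence excluded) or nonexistent. These three partitions have $1$, $2$, and $3$ parts respectively, contributing $q+q^2+q^3$. Adding the extension count and the exceptional contribution produces the claimed identity. The main subtlety is the overcount bookkeeping: one must confirm that after removing $\{\A,\B\}$ with $\A=\B$ the resulting complement ranges over a fully unrestricted $C_i$-partition, and that aside from the $\E(2)=\E(3)$ coincidence no further collisions arise, in particular none between extensions of two \emph{distinct} source partitions (which is exactly the content of the remark following Proposition~\ref{proposition:overcount}).
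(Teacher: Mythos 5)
Your proposal is correct and follows essentially the same route as the paper's proof: the naive count $(1+q)^2(\P_{C_{r-1}}(q)-q)$ from the four extensions applied to multi-part source partitions, the $q^3\sum_{i=1}^{r-2}\P_{C_i}(q)$ correction for the $\E(2)=\E(3)$ collision when $\A=\B$, and the $q+q^2+q^3$ term for the three exceptional partitions $P_1,P_2,P_3$. The extra details you supply (why $\A=\B$ forces both parts to be nonhooked, and why the complement after deleting $\{\A,\B\}$ is a genuine $C_i$-partition) are correct refinements of steps the paper leaves implicit.
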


\begin{proof}
Recall that every partition $P$ (except for the three partitions $P_1$, $P_2$ and $P_3$ listed in Equations \eqref{partition1}, \eqref{partition2}, and \eqref{partition3}, respectively) of the vector $e_1+2e_2+\cdots+2e_{r-1}+2e_r$ comes from an extension of a partition $P'$ of $e_1+2e_2+\cdots+2e_{r-1}$ except the full partition $\{e_1+2e_2+\cdots+2e_{r-1}\}$ via the four extensions $\E(1),$ $\E(2),$ $\E(3),$ and $\E(4)$. These extensions respectively add two, one, one, and zero parts.  The polynomial whose coefficients encode the total of number of said extensions is \[ (1+2q+q^2) (\P_{C_{r-1}} (q)-q)= (1+q)^2( \P_{C_{r-1}} (q)-q).  \] 
However, by Proposition~\ref{proposition:overcount}, $ (1+q)^2 (\P_{C_{r-1}} (q)-q)$ double counts the contribution of the 
partitions $P$ of $e_1+2e_2+\cdots+2e_{r-1}+2e_r$ obtained from extensions $\E(2)$ and $\E(3)$ of partitions $P'$ of 
$e_1+2e_2+\cdots+2e_{r-1}$ which contain two equal parts 
$\A$ and $\B$ such that \[ \A=\B=e_{i+1} + \cdots + e_{r-1} \] for some $i$ with $1\leq i \leq r-2$. Consider such partitions $P'$ for a fixed $i$.
The set  of such partitions correspond bijectively with the partitions of $e_1+2e_2+\cdots+2e_i$. 
Thus the polynomial encoding such double counted partitions is given by $q^3\P_{C_i}(q)$, where we multiply by $q^3$ because $P'(2)$ and 
$P'(3)$ each have three more parts than $P'$. 
By ranging over all possible $i$ we have that the total number of double counted partitions is 
 \[ q^3 \left( \sum_{i=1}^{r-2} \P_{C_i}(q) \right ) .\] 
 Thus \[\P_{C_r} (q) = (1+q)^2( \P_{C_{r-1}} (q)-q) - q^3 \left( \sum_{i=1}^{r-2} \P_{C_i}(q) \right )+(q+q^2+q^3) ,\]
 where the term $(q+q^2+q^3)$ is the contribution from the three partitions $P_1$, $P_2$, and $P_3$ given in Equations 
\eqref{partition1}, 
\eqref{partition2}, and \eqref{partition3}.
\end{proof}

\begin{theorem}\label{Cgenfun}  The closed formula for the generating function $\textstyle\sum_{k\geq 1}\P_{C_{k}}(q)x^k$ is given by
\[\displaystyle\sum_{k\geq 1}\P_{C_k}(q)x^k=\frac{qx+(-q^2-q)x^2}{1-(2+2q+q^2)x+(1+2q+q^2+q^3)x^2}.\]
\end{theorem}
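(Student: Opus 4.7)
The plan is to convert the recurrence for $\P_{C_r}(q)$ established in Theorem~\ref{Cthm} into a functional equation for the generating function $S(q,x) := \sum_{k\geq 1} \P_{C_k}(q) x^k$, in direct analogy with the proof of Proposition~\ref{Prop:BNonHookGenFun} (and the proof of Theorem~\ref{Bgenfun}). Since Theorem~\ref{Cthm} is only valid for $r \geq 3$, I first need to peel off the base cases $r = 1, 2$ from $S(q,x)$ and compute them by direct inspection. For $r=1$ the only positive root is $\alpha_1 = \hr$, so $\P_{C_1}(q) = q$. For $r=2$ the highest root $2\alpha_1+\alpha_2$ admits exactly three partitions into positive roots, namely $\{2\alpha_1+\alpha_2\}$, $\{\alpha_1,\alpha_1+\alpha_2\}$, and $\{\alpha_1,\alpha_1,\alpha_2\}$, so $\P_{C_2}(q) = q + q^2 + q^3$.

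Next, I would write
\[
S(q,x) = qx + (q+q^2+q^3)x^2 + \sum_{r \geq 3} \P_{C_r}(q) x^r,
\]
and substitute the recurrence from Theorem~\ref{Cthm} into the tail sum. The $(1+q)^2(\P_{C_{r-1}}(q) - q) x^r$ contribution becomes $(1+q)^2 x(S(q,x) - qx) - (1+q)^2 q \cdot \frac{x^3}{1-x}$, while the constant correction $(q+q^2+q^3) x^r$ sums to $(q+q^2+q^3) \frac{x^3}{1-x}$. The only nontrivial bookkeeping is in the double sum, which I handle by swapping summation order:
\[
\sum_{r \geq 3} \left(\sum_{i=1}^{r-2} \P_{C_i}(q)\right) x^r = \sum_{i \geq 1} \P_{C_i}(q) \sum_{r \geq i+2} x^r = \frac{x^2}{1-x} S(q,x).
\]
Collecting terms yields a linear equation for $S(q,x)$ of the form $S(q,x) \cdot A(q,x) = B(q,x)$, which can be solved in a single step.

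The remaining task, and the only place where care is needed, is the algebraic simplification. I would multiply through by $1-x$ to clear denominators; the denominator $A(q,x)(1-x)$ then becomes $(1-x)\bigl(1 - (1+q)^2 x\bigr) + q^3 x^2$, which expands to the stated $1 - (2+2q+q^2)x + (1+2q+q^2+q^3)x^2$ after noting that $(1+q)^2 + 1 = 2+2q+q^2$ and $(1+q)^2 + q^3 = 1+2q+q^2+q^3$. On the numerator side, the two contributions involving $\frac{x^3}{1-x}$ combine using the identity $(q+q^2+q^3) - (1+q)^2 q = -q^2$, which greatly simplifies the expression and ultimately produces the numerator $qx - (q+q^2)x^2$.

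The main obstacle is precisely this simplification: there are four distinct terms (a bare polynomial, two $\frac{x^3}{1-x}$ terms, and the generating function term), and miscounting any of them — particularly the $(1+q)^2 x(S(q,x) - qx)$ contribution where subtracting the $r=2$ start of the $\P_{C_{r-1}}$ tail is easy to forget — would throw off the final numerator. Once the cancellation $(q+q^2+q^3) - q(1+q)^2 = -q^2$ is observed, however, the closed form falls out immediately and matches the statement of the theorem.
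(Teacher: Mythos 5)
Your proposal is correct, and the algebra checks out: the two tail terms combine via $(q+q^2+q^3)-q(1+q)^2=-q^2$, and clearing the $1-x$ denominator produces exactly the stated numerator $qx-(q+q^2)x^2$ and denominator $1-(2+2q+q^2)x+(1+2q+q^2+q^3)x^2$. However, your route is not the one the paper takes for this theorem. You work ``forward'': substitute the full-history recurrence of Theorem~\ref{Cthm} into $S(q,x)=\sum_{k\ge1}\P_{C_k}(q)x^k$, handle the double sum by interchanging the order of summation to get the $\tfrac{x^2}{1-x}S(q,x)$ term, and solve the resulting linear equation for $S(q,x)$ --- this is precisely the method the paper uses in type $B$ (Proposition~\ref{Prop:BNonHookGenFun}). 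The paper's proof of Theorem~\ref{Cgenfun} instead works ``backward'': it defines $f_k(q)$ as the coefficients of the proposed rational function, reads off from the denominator the two-term recurrence $f_k=(2+2q+q^2)f_{k-1}-(1+2q+q^2+q^3)f_{k-2}$, checks the initial values $f_1,f_2,f_3$ against $\P_{C_1},\P_{C_2},\P_{C_3}$, and then shows by induction (rewriting $(1+q)^2\P_{C_{k-1}}-q^3\P_{C_{k-2}}$ and $\P_{C_{k-1}}-(1+q)^2\P_{C_{k-2}}$ via Theorem~\ref{Cthm} so that the history sums cancel) that $f_k=\P_{C_k}$. The paper's verification avoids any manipulation of infinite series and the $\tfrac{1}{1-x}$ bookkeeping, at the cost of having to already know the answer; your derivation produces the closed form from scratch and is arguably more informative, at the cost of the slightly delicate accounting of the four contributions you correctly flag. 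Both are complete proofs.
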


\begin{proof}
Let \begin{align}\displaystyle\sum_{k\geq 1}f_k(q)x^k&=\frac{qx+(-q^2-q)x^2}{1-(2+2q+q^2)x+(1+2q+q^2+q^3)x^2}\label{fks}.\end{align} We will show that $f_k(q)=\P_{C_k}(q)$ for all $k\geq 1$.
We proceed by induction, and we observe that 
\begin{align*}
f_1(q)&=\P_{C_1}(q)=q,\\
f_2(q)&=\P_{C_2}(q)=q(q^2+q+1), \mbox{and}\\ 
f_3(q)&=\P_{C_3}(q)=q(q^4+2q^3+4q^2+2q+1).
\end{align*}
By induction we can assume that $f_{k-1}(q)=\P_{C_{k-1}}(q)$
 and $f_{k-2}(q)=\P_{C_{k-2}}(q)$. From the rational expression of the generating formula given in Equation \eqref{fks} we know that 
 \begin{align*}
f_k(q)&=(2+2q+q^2)f_{k-1}(q)-(1+2q+q^2+q^3)f_{k-2}(q)\\
 &=  (1+q)^2f_{k-1}(q)-q^3f_{k-2}(q)+f_{k-1}(q)-(1+q)^2
f_{k-2}(q), \text{and by induction hypothesis}\\
&=(1+q)^2\P_{C_{k-1}}(q)-q^3\P_{C_{k-2}}(q)+\P_{C_{k-1}}(q)-(1+q)^2
\P_{C_{k-2}}(q).
\end{align*}
Using Theorem \ref{Cthm} we note that
\[(1+q)^2\P_{C_{k-1}}(q)-q^3\P_{C_{k-2}}(q)=q^2+\P_{C_k}(q)+q^3\left(\displaystyle\sum_{i= 1}^{k-3}\P_{C_i}(q)\right)\]
and
\[\P_{C_{k-1}}(q)-(1+q)^2
\P_{C_{k-2}}(q)=-q^2-q^3\left(\displaystyle\sum_{i= 1}^{k-3}\P_{C_i}(q)\right).\]
Adding the last two equalities yields the desired result. 
\end{proof}

\section{Type $D$} \label{type D}

Let $\P_{D_r}(q):=\wp_q(\hr)$, where $\hr=\alpha_1+2\alpha_2+\cdots+2\alpha_{r-2}+\alpha_{r-1}+\alpha_r$ is the highest root of the Lie algebra of type $D_r$. By Lemma~\ref{lem:TypeD}, we can 
recover $\P_{D_r}(q)$ by determining the number of partitions of $e_1+2e_2+\cdots+2e_{r-2}+e_{r-1}+e_r$ into $k$ parts where the 
parts are as specified in Lemma~\ref{lem:TypeD}.  In this light, throughout this section, for any $\rt \geq 5$ we refer to a 
\emph{partition} of $e_1+2e_2+\cdots+2e_{\rt-2}+e_{\rt-1}+e_{\rt} \in \mathbb{C}^{\infty}$, or equivalently a partition of the highest root in type $D_{\ell}$, as a nonnegative integer combination of $k$ (not necessarily distinct) vectors from the following set:

\begin{itemize}
\item Nonhooked vectors (parts) of the form $e_i +e_{i+1}+\cdots + e_{j}$ with $1 \leq i \leq j \leq \rt$. 
\item Nonhooked vectors (parts) of the form $e_i +e_{i+1}+\cdots + e_{\rt-2}+e_{\rt}$ with $1 \leq i  \leq \rt-2$. 
\item Hooked vectors (parts) of the the form  \[e_i+e_{i+1}+ \cdots + e_{j-1}+2 e_j+2 e_{j+1}+\cdots +2 e_{\rt-2}+e_{\rt-1}+e_{\rt} \] 
with $  1\leq i  < j \leq \rt-2$.
\end{itemize}

Throughout our proof, we will relate the polynomials $\{\P_{D_i}(q)\}$ to the polynomials $\{\P_{B_i}^H(q)\}$ and 
$\{\P_{B_i}^{NH}(q)\}$ from Section \ref{type B}.  As such, for $r \geq 3$, we refer to a \emph{partition} of $e_1+2e_2+2e_3+\cdots + 2e_{\ell} \in 
\mathbb{C}^{\infty}$, or equivalently a partition of the highest root in type $B_{\rt}$, as a nonnegative integer combination of $k$ (not necessarily distinct) vectors from the following set:

\begin{itemize}
\item Nonhooked vectors (parts) of the form $e_i +e_{i+1}+\cdots + e_j$ with $1 \leq i \leq j \leq \rt$.  
\item Hooked vectors (parts) of the the form  $e_i + e_{i+1}+\cdots + e_{j-1} +2e_{j} + 2e_{j+1} +\cdots + 2e_{\rt}$ with $1 \leq 
i < j \leq \rt$.
\end{itemize}
In order to determine the generating function for the sequence $\{P_{D_r}(q)\}_{r \geq 4}$, we explictly relate this sequence to the polynomials for the Lie algebras of type $B$. Before stating the first result we provide the following.
\begin{definition}
Let $r\geq 2$, the let
\[\P_{D_r}^H(q)=c_0+c_1q+c_2q^2+\cdots+c_kq^k,\]
where $c_i$ is the number of partitions of $e_1+2e_2+\cdots+2e_{r-2}+e_{r-1}+e_r$ with $i$ parts where one part is a hooked vector (of type $D_r$).
Similarly, let
\[\P_{D_r}^{NH}(q)=c_0+c_1q+c_2q^2+\cdots+c_kq^k,\]
where $c_i$ is the number of ways to write $e_1+2e_2+\cdots+2e_{r-2}+e_{r-1}+e_r$ as a sum of exactly $i$ parts where no parts are hooked vectors (of type $D_r$).
\end{definition}

\begin{theorem}\label{thm:Dthm}
For $r \geq 2$,
\[
\P_{D_{r+2}}(q) = \P_{B_{r}}^H(q) + q^2 \P_{B_{r}}^{NH}(q) + (2q+2) \left( 2 \P_{B_{r}}^{NH}(q) - q^2 \sum_{i=1}^{r-1} 
\P_{B_{i}}^{NH}(q) \right).
\]
\end{theorem}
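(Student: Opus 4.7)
The plan is to establish an explicit correspondence between type $D_{r+2}$ partitions of $\hr = e_1+2e_2+\cdots+2e_r+e_{r+1}+e_{r+2}$ and type $B_r$ partitions of $e_1+2e_2+\cdots+2e_r$ by projecting each $D_{r+2}$ part onto its first $r$ coordinates. The only $D_{r+2}$ positive roots supported entirely at positions $r+1, r+2$ are $e_{r+1}$ and $e_{r+2}$ themselves (since $e_{r+1}+e_{r+2} = 2\epsilon_{r+1}$ is not a root), so every $D_{r+2}$ partition decomposes as a collection of lifted $B_r$ parts together with some copies of $e_{r+1}$ and $e_{r+2}$.

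A $B_r$ part not involving $e_r$ has a unique lift to $D_{r+2}$. The hooked $B_r$ part $e_i+\cdots+e_{j-1}+2e_j+\cdots+2e_r$ lifts uniquely to its hooked $D_{r+2}$ counterpart, supplying contributions $(1,1)$ at positions $(r+1,r+2)$ with no extras needed; summing over such partitions yields the first term $\P_{B_r}^H(q)$ of the theorem. By contrast, a nonhooked part of the form $e_i+\cdots+e_r$ admits four lifts, namely itself, $e_i+\cdots+e_{r+1}$, $e_i+\cdots+e_{r+2}$, and the skip vector $e_i+\cdots+e_r+e_{r+2}$, contributing $(0,0), (1,0), (1,1), (0,1)$ respectively at positions $(r+1,r+2)$.

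For a nonhooked $B_r$ partition, exactly two parts $A = e_a+\cdots+e_r$ and $B = e_b+\cdots+e_r$ carry $e_r$, each admitting four lift options. After making these choices, the deficit at positions $(r+1,r+2)$ is filled by copies of $e_{r+1}$ and $e_{r+2}$, each added copy multiplying the weight by $q$. Enumerating the $4\times 4 = 16$ ordered lift choices, retaining only those $9$ whose contribution $(s,t)$ at positions $(r+1,r+2)$ satisfies $s,t\leq 1$, and accounting for the required extras shows that the total lift-weight contribution per nonhooked $B_r$ partition is $(q+2)^2 = q^2+4q+4$ when $A\neq B$, and $q^2+2q+2$ when $A = B$ (where several ordered lift pairs collapse as multisets). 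Moreover, partitions with $A = B = e_a+\cdots+e_r$ (necessarily $a\geq 2$) are in bijection with nonhooked $B_{a-1}$ partitions of $e_1+2e_2+\cdots+2e_{a-1}$ obtained by deleting the two copies of $A$, so the total weight of such partitions is $q^2 \sum_{i=1}^{r-1} \P_{B_i}^{NH}(q)$.

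Combining these contributions yields
\[\P_{D_{r+2}}(q) = \P_{B_r}^H(q) + (q^2+4q+4)\P_{B_r}^{NH}(q) - (2q+2) q^2 \sum_{i=1}^{r-1} \P_{B_i}^{NH}(q),\]
which rearranges to the claimed expression. The main obstacle will be the careful bookkeeping in the nonhooked case: identifying which of the $16$ ordered lift choices collapse under multiset equivalence when $A=B$, and in particular remembering that $e_{r+1}+e_{r+2}$ is \emph{not} a valid part (so no single-part extra can fill the $(1,1)$ deficit when both lifts are trivial), is precisely what produces the difference $(q^2+4q+4) - (q^2+2q+2) = 2q+2$ that appears as the coefficient of the correction term in the final formula.
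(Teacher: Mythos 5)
Your proof is correct and is essentially the paper's argument in different packaging: the paper also reduces the hooked case to a part-count-preserving bijection with hooked $B_r$ partitions, and its five-case analysis (based on whether $e_{r+1}$, $e_{r+2}$ appear as standalone parts, attach to different parts, or attach to the same part) is exactly your enumeration of the $9$ admissible lift pairs totalling $(q+2)^2$, with the same $A=\B$ overcount correction $q^2\sum_{i=1}^{r-1}\P_{B_i}^{NH}(q)$ appearing case by case. Your explicit observation that $e_{r+1}+e_{r+2}$ is not a root (so no single extra part can fill a $(1,1)$ deficit) is a point the paper uses only implicitly, and is worth keeping.
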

\begin{proof}
We first observe there is a bijection between partitions of $e_1+2e_2+\cdots+2e_{r}+e_{r+1}+e_{r+2}$ that have a hooked part and 
partitions of $e_1+2e_2+2e_3+\cdots + 2e_r$ that have a hooked part that preserves the number of parts in each partition.  Indeed, 
this bijection takes any partition of $e_1+2e_2+\cdots+2e_{r}+e_{r+1}+e_{r+2}$ and removes the $e_{r+1}+e_{r+2}$ from its hooked 
part.  From this we deduce $\P_{D_{r+2}}^H(q)=\P_{B_{r}}^H(q)$.  It therefore remains to show that
\[
\P_{D_{r+2}}^{NH}(q) = q^2 \P_{B_{r}}^{NH}(q) + (2q+2) \left( 2 \P_{B_{r}}^{NH}(q) - q^2 \sum_{i=1}^{r-1} \P_{B_{i}}^{NH}(q) 
\right).
\]

We split this into five cases depending on the partitions of $e_1+2e_2+\cdots+2e_{r}+e_{r+1}+e_{r+2}$ and the parts which contain the summands $e_{r+1}$ and $e_{r+2}$.\\

\noindent
{\bf Case 1}: This case considers partitions of $e_1+2e_2+\cdots+2e_{r}+e_{r+1}+e_{r+2}$ containing $e_{r+1}$ and $e_{r+2}$ as parts. The polynomial encoding the count for all such 
partitions is $q^2\P_{B_r}^{NH}(q)$ because each of these is uniquely obtained by introducing the parts $\{e_{r+1},e_{r+2}\}$ to a 
partition of $e_1+2e_2+2e_3+\cdots + 2e_r$ (in type $B_r$) that itself has no hooked part.\\

\noindent
{\bf Case 2}: This case considers partitions of $e_1+2e_2+\cdots+2e_{r}+e_{r+1}+e_{r+2}$ containing $e_{r+2}$ as a part, but in which $e_{r+1}$ is not a part. Every partition of 
$e_1+2e_2+2e_3+\cdots + 2e_r$ without a hooked part can be extended in two ways to get such a partition by adding $e_{r+1}$ to a 
summand involving $e_r$.  On the level of polynomials, this gives $2q \P_{B_r}^{NH}(q)$ (the $q$ comes from 
introducing the lone $e_{r+2}$ 
part to the partition of $e_1+2e_2+2e_3+\cdots + 2e_r$).  However, this double counts the contributions of 
partitions of  $e_1+2e_2+2e_3+\cdots + 2e_r$ whose two parts 
containing $e_r$ are the same.  The over count is given by $q \left( q^2 \textstyle\sum_{i=1}^{r-1} \P_{B_{i}}^{NH}(q) \right)$, the $q$ for 
the lone $e_{r+2}$ part, and the $q^2$ for the two summands containing $e_r$ extended from a partition of $e_1+2e_2+2e_3+\cdots 
+ 2e_i$ for some $i \in \{1,2\ldots,r-1\}$.  On the level of polynomials this gives us
\[
q \left( 2 \P_{B_{r}}^{NH}(q) - q^2 \sum_{i=1}^{r-1} \P_{B_{i}}^{NH}(q) \right).
\]

\noindent
{\bf Case 3}: This case considers partitions of $e_1+2e_2+\cdots+2e_{r}+e_{r+1}+e_{r+2}$ containing $e_{r+1}$ as a part, but in which $e_{r+2}$ is not a part.
 This argument follows 
directly from the argument in the previous case and gives us
\[
q \left( 2 \P_{B_{r}}^{NH}(q) - q^2 \sum_{i=1}^{r-1} \P_{B_{i}}^{NH}(q) \right).
\]

\noindent
{\bf Case 4}: This case considers the partitions of $e_1+2e_2+\cdots+2e_{r}+e_{r+1}+e_{r+2}$ with $e_{r+1}$ and $e_{r+2}$ as summands in different parts. Any 
partition of $e_1+2e_2+2e_3+\cdots + 2e_r$ that does not have a hooked part can be extended to such a partition in two ways: by 
adding $e_{r+1}$ to one part containing $e_r$ and adding $e_{r+2}$ to the other part containing $e_r$.  This does not change the 
number of parts so on the level of polynomials we get $2 \P_{B_{r}}^{NH}(q)$, but we must subtract double counts which come from 
those partitions of $$e_1+2e_2+2e_3+\cdots + 2e_r$$ whose two parts containing $e_r$ are the same.  By a similar argument to the 
previous case, this double count is accounted for by $q^2 \textstyle\sum_{i=1}^{r-1} \P_{B_{i}}^{NH}(q)$. Hence the polynomial encoding the count for such 
partitions is
\[
 2 \P_{B_{r}}^{NH}(q) - q^2 \sum_{i=1}^{r-1} \P_{B_{i}}^{NH}(q).
\]

\noindent
{\bf Case 5}: This case considers the remaining partitions of $e_1+2e_2+\cdots+2e_{r}+e_{r+1}+e_{r+2}$: those with $e_{r+1}$ and $e_{r+2}$ as summands of the same part. Any 
partition of $e_1+2e_2+2e_3+\cdots + 2e_r$ not containing a hooked part can be extended to such a partition by adding 
$e_{r+1}+e_{r+2}$ to a summand containing $e_r$.  This does not change the number of parts so the total we get on the level of 
polynomials is $2 \P_{B_{r}}^{NH}(q)$.  But we must subtract double counts which come from those partitions in 
$e_1+2e_2+2e_3+\cdots + 2e_r$ whose two summands containing $e_r$ are the same.  This is accounted for by $q^2 \textstyle\sum_{i=1}^{r-1} 
\P_{B_{i}}^{NH}(q)$.  So the polynomial encoding the count for such partitions is given by
\[
 2 \P_{B_{r}}^{NH}(q) - q^2 \sum_{i=1}^{r-1} \P_{B_{i}}^{NH}(q).
\]

Adding these five cases yields the desired result.
\end{proof}

\begin{theorem} \label{thm:DGenFun}
The closed form for the generating series $\textstyle\sum_{r \geq 4} \P_{D_r}(q)x^r$ is 
\[
\sum_{r \geq 4} \P_{D_r}(q)x^r = \dfrac{(q+4q^2+6q^3+3q^4+q^5)x^4 - 
(q+4q^2+6q^3+5q^4+3q^5+q^6)x^5}{1-(2+2q+q^2)x+(1+2q+q^2+q^3)x^2}.
\]
\end{theorem}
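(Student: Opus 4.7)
The plan is to convert the pointwise identity of Theorem \ref{thm:Dthm} into a generating-function identity, and then express everything in terms of generating functions already obtained in Section \ref{type B}.

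First I would reindex Theorem \ref{thm:Dthm} by replacing $r$ with $r-2$; for $r\geq 4$ this gives
\[
\P_{D_r}(q) = \P_{B_{r-2}}^H(q) + q^2 \P_{B_{r-2}}^{NH}(q) + (2q+2)\left(2 \P_{B_{r-2}}^{NH}(q) - q^2 \sum_{i=1}^{r-3} \P_{B_i}^{NH}(q)\right).
\]
Multiplying by $x^r$ and summing over $r\geq 4$ transforms the right-hand side into three kinds of contributions: an $x^2$-shift of a tail of $T(q,x):=\sum_{r\geq 1}\P_{B_r}^{H}(q)x^r$, an $x^2$-shift of a tail of $S(q,x):=\sum_{r\geq 1}\P_{B_r}^{NH}(q)x^r$, and a triangular double sum $\sum_{r\geq 4}\sum_{i=1}^{r-3}\P_{B_i}^{NH}(q)x^r$. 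The double sum I would collapse by swapping the order of summation, producing $\frac{x^3}{1-x}\,S(q,x)$. The boundary corrections reduce to the single term $-qx^3$ coming from subtracting $\P_{B_1}^{NH}(q)x=qx$, since $\P_{B_1}^{H}(q)=0$.

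Next I would substitute the explicit rational forms for $S(q,x)$ and $T(q,x)$. Proposition \ref{Prop:BNonHookGenFun} already supplies $S(q,x)$, and Lemma \ref{Lem:HookNoHook} together with Theorem \ref{Bgenfun} yields $T(q,x)=\sum_{r\geq 1}\P_{B_r}(q)x^r - S(q,x)$. A short subtraction and factorisation give
\[
T(q,x) = \frac{qx^2(1-x)}{D(q,x)}, \qquad S(q,x) = \frac{qx(1-x)\bigl(1-(1+q)x\bigr)}{D(q,x)},
\]
where $D(q,x) = 1-(2+2q+q^2)x+(1+2q+q^2+q^3)x^2$. The crucial algebraic observation is that both numerators carry a factor of $(1-x)$, which neatly cancels the spurious $(1-x)$ in the denominator coming from the partial-sum collapse $\frac{x^3}{1-x}\,S(q,x)$; otherwise the final expression would have a pole at $x=1$, contradicting the claimed form.

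Finally, I would combine all pieces over the common denominator $D(q,x)$. After the $(1-x)$ cancellation, the numerator reduces to a polynomial in $x$ supported in degrees $4$ and $5$, whose coefficients I would verify to be $(q+4q^2+6q^3+3q^4+q^5)$ and $-(q+4q^2+6q^3+5q^4+3q^5+q^6)$ respectively. The main obstacle is purely bookkeeping rather than conceptual: subtracting off the correct $r=1,2,3$ boundary terms when passing from tails to full series, ensuring the $(1-x)$ cancellation is executed cleanly, and carefully collecting polynomial coefficients in $q$. Once those are handled, the simplification to the stated rational form is routine arithmetic.
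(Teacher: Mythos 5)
Your proposal is correct and follows essentially the same route as the paper: reindex Theorem \ref{thm:Dthm}, sum against $x^r$, collapse the triangular sum into $\frac{x^3}{1-x}$ times the nonhooked type-$B$ series, substitute the rational forms from Proposition \ref{Prop:BNonHookGenFun} and Lemma \ref{Lem:HookNoHook} with Theorem \ref{Bgenfun}, and simplify (your factored numerators $qx^2(1-x)$ and $qx(1-x)(1-(1+q)x)$ agree with the paper's expressions). The only slight looseness is describing the boundary correction as ``the single term $-qx^3$''; the subtracted $\P_{B_1}^{NH}(q)x=qx$ actually enters through the factor $(S(q,x)-qx)$ in the two terms with coefficients $q^2x^2$ and $(4q+4)x^2$, but this does not affect the validity of the argument.
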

\begin{proof}Observe that
\begin{align*}
\hspace{-7mm}\sum_{r \geq 4} \P_{D_r}(q)x^r &= \sum_{r \geq 2} \P_{D_{r+2}}(q)x^{r+2} \\
&=  \sum_{r \geq 2} \left( \P_{B_{r}}^H(q) + q^2 \P_{B_{r}}^{NH}(q) + (2q+2) \left( 2 \P_{B_{r}}^{NH}(q) - q^2 \sum_{i=1}^{r-1} 
\P_{B_{i}}^{NH}(q) \right) \right) x^{r+2} \\
&= \sum_{r \geq 2} \P_{B_{r}}^H(q) x^{r+2} + q^2 \sum_{r \geq 2} \P_{B_{r}}^{NH}(q) x^{r+2} + (4q+4)\sum_{r \geq 
2}\P_{B_{r}}^{NH}(q) x^{r+2} \\
&\;\;\;\;-q^2(2q+2) \sum_{r \geq 2} \left( \sum_{i=1}^{r-1} \P_{B_{i}}^{NH}(q) \right) x^{r+2} \\
&= x^2 \sum_{r \geq 2} \P_{B_{r}}^H(q) x^{r} + q^2x^2 \sum_{r \geq 2} \P_{B_{r}}^{NH}(q) x^{r} + (4q+4)x^2 \sum_{r \geq 
2}\P_{B_{r}}^{NH}(q) x^{r} \\
&\;\;\;\;- \dfrac{q^2(2q+2)x^3}{1-x} \sum_{r \geq 1} \P_{B_{r}}^{NH}(q) x^r.
\end{align*}
Now for simplicity, let 
\[
S(q,x) = \sum_{r \geq 1} \P_{B_{r}}^H(q) x^{r}, \ \ T(q,x) = \sum_{r \geq 1} \P_{B_{r}}^{NH}(q) x^{r}.
\]
Directly from Proposition~\ref{Prop:BNonHookGenFun}, we have 
\[
T(q,x)=\frac{qx+(-2q-q^2)x^2+(q+q^2)x^3}{1-(2+2q+q^2)x+(1+2q+q^2+q^3)x^2}
\]
and Lemma~\ref{Lem:HookNoHook} together with Theorem~\ref{Bgenfun} implies
\begin{align*}
\hspace{-3mm}S(q,x) &= \frac{qx+(-q-q^2)x^2+q^2x^3}{1-(2+2q+q^2)x+(1+2q+q^2+q^3)x^2} - T(q,x)\\
&= \frac{qx+(-q-q^2)x^2+q^2x^3}{1-(2+2q+q^2)x+(1+2q+q^2+q^3)x^2} - 
\frac{qx+(-2q-q^2)x^2+(q+q^2)x^3}{1-(2+2q+q^2)x+(1+2q+q^2+q^3)x^2} \\
&= \frac{qx^2-qx^3}{1-(2+2q+q^2)x+(1+2q+q^2+q^3)x^2}.
\end{align*}
Thus,
\begin{align*}
\hspace{-2mm}\sum_{r \geq 4} \P_{D_r}(q)x^r &= x^2 \sum_{r \geq 2} \P_{B_{r}}^H(q) x^{r} + q^2x^2 \sum_{r \geq 2} \P_{B_{r}}^{NH}(q) x^{r} + 
(4q+4)x^2 \sum_{r \geq 2}\P_{B_{r}}^{NH}(q) x^{r} \\
&\;\;\;\;- \dfrac{q^2(2q+2)x^3}{1-x} \sum_{r \geq 1} \P_{B_{r}}^{NH}(q) x^r \\
&= x^2 S(q,x) + q^2x^2(T(q,x)-qx) + (4q+4)x^2(T(q,x)-qx) - \dfrac{q^2(2q+2)x^3}{1-x} T(q,x) \\
&= \dfrac{(q+4q^2+6q^3+3q^4+q^5)x^4 - (q+4q^2+6q^3+5q^4+3q^5+q^6)x^5}{1-(2+2q+q^2)x+(1+2q+q^2+q^3)x^2}. \qedhere
\end{align*}
\end{proof}

\section*{Acknowledgments}
The first and third author would like to thank the American Mathematical Society's Mathematics Research Communities program on Algebraic and Geometric Methods in Applied Discrete Mathematics for initiating their collaboration. The first author gratefully acknowledges travel support from the Faculty Development Research Fund at the United States Military Academy.
\appendix
\section{Appendix: Explicit Formulas}\label{appendix}
\begin{proof}[Proof of Corollary~\ref{corclosed}]
Prior to proceeding to each Lie type individually we recall the generating functions for $\textstyle\sum_{r \geq 1} \P_{B_r}(q)x^r$, $\textstyle\sum_{r \geq 1} \P_{C_r}(q)x^r$ and $\textstyle\sum_{r \geq 4} \P_{D_r}(q)x^r$ satisfy the rational expressions
\begin{align}
\sum_{r \geq 1} \P_{B_r}(q)x^r&=
\frac{qx+(-q-q^2)x^2+q^2x^3}{1-(2+2q+q^2)x+(1+2q+q^2+q^3)x^2}\label{typebgen}\\
\sum_{r \geq 1} \P_{C_r}(q)x^r&=\frac{qx+(-q-q^2)x^2}{1-(2+2q+q^2)x+(1+2q+q^2+q^3)x^2}\label{typecgen}\\
\sum_{r \geq 4} \P_{D_r}(q)x^r&= \dfrac{(q+4q^2+6q^3+3q^4+q^5)x^4 - (q+4q^2+6q^3+5q^4+3q^5+q^6)x^5}{1-(2+2q+q^2)x+(1+2q+q^2+q^3)x^2}.\label{typedgen}
\end{align}
Thus, the sequences $\{\P_{B_r}(q)\}$, $\{\P_{C_r}(q)\}$ and $\{\P_{D_r}(q)\}$ satisfy the recurrence relations
\begin{align*}
\P_{B_r}(q) &= (2+2q+q^2)\P_{B_{r-1}}(q) - (1+2q+q^2+q^3)\P_{B_{r-2}}(q), \mbox{ for $r \geq 4$, }\\
\P_{C_r}(q) &= (2+2q+q^2)\P_{C_{r-1}}(q) - (1+2q+q^2+q^3)\P_{C_{r-2}}(q), \mbox{ for $r \geq 3$,}\\
\P_{D_r}(q) &= (2+2q+q^2)\P_{D_{r-1}}(q) - (1+2q+q^2+q^3)\P_{D_{r-2}}(q),\mbox{ for $r \geq 6$}.
\end{align*}
Consequently, there are functions $\alpha_1(q),\;\alpha_2(q),\;\gamma_1(q),\;\gamma_2(q),\;\delta_1(q),\;\delta_2(q),\;\beta_1(q),\;\beta_2(q)$ such that 
\begin{align*}
\P_{B_r}(q) &= \alpha_1(q) \cdot 
\left(\beta_1(q)\right)^{r-2} + \alpha_2(q) \cdot \left(\beta_2(q)\right)^{r-2} \mbox{ for every $r \geq 2$},\\
\P_{C_r}(q) &= \gamma_1(q) \cdot 
\left(\beta_1(q)\right)^{r-1} + \gamma_2(q) \cdot \left(\beta_2(q)\right)^{r-1}\mbox{ for every $r \geq 1$},\\
\P_{D_r}(q) &= \delta_1(q) \cdot 
\left(\beta_1(q)\right)^{r-4} + \delta_2(q) \cdot \left(\beta_2(q)\right)^{r-4} \mbox{ for every $r \geq 4$}.
\end{align*}  To find the explicit 
formulas for $\P_{B_r}(q)$, $\P_{C_r}(q)$ and $\P_{D_r}(q)$ in terms of $r$, we determine the functions $\alpha_1(q),\;\alpha_2(q),\;\gamma_1(q),\;\gamma_2(q),\;\delta_1(q),\;\delta_2(q),\;\beta_1(q),\;\beta_2(q)$.

From the generating functions in Equations \eqref{typebgen}, \eqref{typecgen} and \eqref{typedgen}, $\beta_1(q)$ and $\beta_2(q)$ are roots, in the variable $\lambda$ in terms of $q$, of the polynomial $f_{\lambda}(q)=\lambda^2-(2+2q+q^2)\lambda+(1+2q+q^2+q^3)$.  Hence, without loss of generality,
\[
\beta_1(q) = \dfrac{2+2q+q^2+q\sqrt{q^2+4}}{2}, \hspace{0.1in} \beta_2(q) = \dfrac{2+2q+q^2-q\sqrt{q^2+4}}{2}
.\]
We now continue on a case by case basis.\\
\noindent
{\bf Type B:} Since $\P_{B_2}(q) = q^3+q^2+q$ and $\P_{B_3}(q)=q^5+2q^4+4q^3+3q^2+q$, we can determine $\alpha_1(q),\alpha_2(q)$ by solving the 
system
\begin{align*}
q^3+q^2+q&= \alpha_1(q)  + \alpha_2(q)  \\
q^5+2q^4+4q^3+3q^2+q &= \alpha_1(q) \cdot\beta_1(q)+\alpha_2(q)\cdot \beta_2(q).
\end{align*}
This yields
\begin{align*}
\alpha_1(q) &= q \cdot \dfrac{q^4 + q^2\left(\sqrt{q^2+4}+5\right) + 
q\left(3\sqrt{q^2+4}+4\right)+2\left(\sqrt{q^2+4}+2\right)+q^3\left(\sqrt{q^2+4}+1\right)}{2(q^2+4)}, \\
\alpha_2(q) &= q \cdot \dfrac{q^4-q^2\left(\sqrt{q^2-4}-5\right)+q\left(4-3\sqrt{q^2+4}\right)-2\left(\sqrt{q^2+4}-2\right) - 
q^3\left(\sqrt{q^2+4}-1\right)}{2(q^2+4)}.
\end{align*}
\noindent
{\bf Type C:} Since $\P_{C_1}(q) = q$ and $\P_{C_2}(q) = q^3+q^2+q$, we can determine $\gamma_1(q),\gamma_2(q)$ by solving the system
\begin{align*}
q &= \gamma_1(q)  + \gamma_2(q)  \\
q^3+q^2+q &= \gamma_1(q)\cdot\beta_1(q)+\gamma_2(q)\cdot\beta_2(q).
\end{align*}
This yields
\[
\gamma_1(q) = \dfrac{q\left(q^2 + q\sqrt{q^2+4}+4\right)}{2(q^2+4)},\hspace{3mm}
\gamma_2(q) = \dfrac{q\left(q^2 - q\sqrt{q^2+4}+4\right)}{2(q^2+4)}.
\]
\noindent
{\bf Type D:} Since $\P_{D_4}(q) = q^5+3q^4+6q^3+4q^2+q$ and $\P_{D_5}(q) = q^7+4q^6+11q^5+17q^4+15q^3+6q^2+q$, we can determine $\delta_1(q),\delta_2(q)$ by solving the system
\begin{align*}
 q^5+3q^4+6q^3+4q^2+q &= \delta_1(q)  + \delta_2(q)  \\
q^7+4q^6+11q^5+17q^4+15q^3+6q^2+q &= \delta_1(q)\cdot\beta_1(q)+\delta_2(q)\cdot\beta_2(q).
\end{align*}
This yields
\begin{align*}
\delta_1(q) &=q\cdot\frac{ 2 + 9 q + 12 q^2 + 8 q^3 + 3 q^4 + q^5 +  (1 + 4 q + 6 q^2 + 3 q^3 + q^4)\sqrt{q^2+4}}{2 \sqrt{
 q^2+4}},\\
\delta_2(q) &=q\cdot\frac{-2 - 9 q - 12 q^2 - 8 q^3 - 3 q^4 - q^5 +(1 + 4 q + 6 q^2 + 3 q^3 + q^4)\sqrt{q^2+4}}{2\sqrt{q^2+4}}.\qedhere
\end{align*}
\end{proof}
\bibliography{HIO.bib}{}
\bibliographystyle{plain}

\end{document}